\documentclass{article}

\usepackage[preprint]{neurips_2020}

\usepackage[utf8]{inputenc} % allow utf-8 input
\usepackage[T1]{fontenc}    % use 8-bit T1 fonts
\usepackage{hyperref}       % hyperlinks
\usepackage{url}            % simple URL typesetting
\usepackage{booktabs}       % professional-quality tables
\usepackage{nicefrac}       % compact symbols for 1/2, etc.
\usepackage{microtype}      % microtypography

\usepackage[textsize=tiny]{todonotes}

\usepackage{amsfonts, amsmath, amssymb, amsthm, mathrsfs}

\usepackage[capitalize,noabbrev]{cleveref}

\newtheorem{thm}{Theorem}
\newtheorem{corollary}{Corollary}
\newtheorem{defi}{Definition}
\newtheorem{lem}{Lemma}

\newtheorem{rmk}{Remark}

\newcommand\norm[1]{\lVert#1\rVert}

\newcommand\set[1]{\left\{ #1 \right\}}

\newcommand\innerprod[1]{\langle #1\rangle}

\newcommand\E{\mathbb{E}}
\newcommand\Prb{\mathbb{P}}

\newcommand \Lip{\mathbf{Lip}}

\DeclareMathOperator{\supp}{supp}

\title{Convex function through Doob-Meyer decomposition}

\author{%
  Minh Nguyen\thanks{Correspondence to: minhpnguyen@utexas.edu}\\
  Department of Mathematics\\
  University of Texas at Austin
}

\begin{document}

\maketitle

\begin{abstract}
In this work, we aim to study a strong version of Ito's lemma for convex function. By considering the corresponding sub-martingale on a Brownian motion, we gain more insights about the convex function through a probabilistic viewpoint. The Doob-Meyer decomposition of this sub-martingale subsequently helps us deduce the Ito's lemma for convex function, and enables us to study a convex function via stochastic calculus. In particular, we use this version of Ito's lemma together probabilistic inequalities to recover an important analytic property of the convex function, which is its second-order differentiability.
\end{abstract}

We start by outlining a general plan for the paper. We first introduce the Doob-Meyer decomposition of a sub-martingale, which is obtained by applying a convex function $f$ on a Brownian motion. We study the decomposition's properties, and the connection between its Revuz measure and the second derivative of $f$. We then use the decomposition to prove a strong Ito's lemma for convex function $f$ in the next section. We end the paper by applying this Ito's lemma to the \textbf{Busemann-Feller-Alexandrov} theorem for a convex function. 
\vspace{2mm}

Throughout this paper, let $(\Omega, \mathcal{F}, \Prb)$ be a fixed probability space with a filtration $\set{\mathcal{F}_t}$. We denote $W_t^x$ is the $d$-dimensional Brownian motion starting at $x$ with respect to this filtration. Then we have $W_t^x = x + W_t^0 = x+ W_t$. 
\vspace{2mm}

\section{Doob-Meyer decomposition of submartingale}
We now introduce the Doob-Meyer decomposition of sub-martingale and then characterize the non-decreasing process component $A_t$ by the Revuz measure.
\begin{lem}\label{doobmeyer_of_f}
For the convex function $f$ with at most polynomial growth, the process $f(W_t^x)$ is a sub-martingale, and the Doob-Meyer decomposition of $f(W_t^x)$ is $f(x) + M^f_t + A^f_t = M_t + A_t$, where $A_t$ is a non-decreasing process and is a positive continuous additive functional (PCAF) of $W$.
\end{lem}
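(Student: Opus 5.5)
The proof has four steps: integrability, the submartingale property via Jensen, the Doob--Meyer decomposition, and the additive-functional structure of $A$, which is the main work.

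\textbf{Integrability and submartingale property.} Since $|f(y)|\le C(1+|y|^p)$ and Gaussian moments are finite, $\E\sup_{s\le t}|f(W_s^x)|<\infty$ for every $t$. For $s\le t$, write $W_t^x = W_s^x + (W_t-W_s)$ with the increment independent of $\mathcal F_s$ and centred. The conditional Jensen inequality, applied to the regular conditional law of $W_t^x$ given $\mathcal F_s$, gives
\[
\E[f(W_t^x)\mid\mathcal F_s] \ge f\bigl(\E[W_t^x\mid\mathcal F_s]\bigr)=f(W_s^x).
\]
So $f(W_t^x)$ is a submartingale. It is continuous because a convex function on $\mathbb R^d$ is continuous.

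\textbf{Doob--Meyer decomposition.} On each $[0,T]$ the process is of class (D): the family $\{f(W_\tau^x):\tau\le T\}$ is dominated by the integrable variable $\sup_{s\le T}|f(W_s^x)|$. The Doob--Meyer theorem, using the usual augmentation of the Brownian filtration, then yields a unique decomposition $f(W_t^x)=f(x)+M_t^f+A_t^f$. Here $M^f$ is a martingale and $A^f$ is a predictable non-decreasing process with $A_0=0$. A continuous submartingale of class (D) is regular, so $A^f$ is continuous. Consistency in $T$ follows from uniqueness, so the decomposition is defined on $[0,\infty)$.

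\textbf{PCAF property (main step).} I must show that $A$ can be chosen adapted to the natural filtration of $W$ and satisfying $A_{t+s}=A_t+A_s\circ\theta_t$ simultaneously for all starting points $x$, where $\theta_t$ is the shift. The obstacle is that Doob--Meyer is stated for a fixed probability measure and gives $A$ only up to indistinguishability.

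I would avoid this by building $A$ explicitly from one formula that does not depend on $x$. Let $f_\varepsilon=f*\rho_\varepsilon$ be a mollification. It is smooth, convex and of polynomial growth, with $D^2f_\varepsilon\ge0$, and Itô's formula gives
\[
A^{\varepsilon}_t=\tfrac12\int_0^t\Delta f_\varepsilon(W_s)\,ds .
\]
This is manifestly a continuous additive functional, non-negative and non-decreasing, for every path. The martingale parts $\int_0^t\nabla f_\varepsilon(W_s)\cdot dW_s$ converge in $L^2$, uniformly on compact time intervals, to $\int_0^t\nabla f(W_s)\cdot dW_s$. This uses the locally uniform convergence of $f_\varepsilon\to f$ and the a.e.\ convergence $\nabla f_\varepsilon\to\nabla f$, with polynomial bounds on $|\nabla f_\varepsilon|$ coming from convexity and growth. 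Hence $A^\varepsilon_t\to A_t:=f(W_t)-f(W_0)-\int_0^t\nabla f(W_s)\cdot dW_s$ uniformly on compacts in probability, under every $\Prb^x$.

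Along a subsequence the convergence is a.s., so $A$ inherits non-negativity, monotonicity, continuity and adaptedness. Additivity also passes to the limit, because $A^\varepsilon_{t+s}=A^\varepsilon_t+A^\varepsilon_s\circ\theta_t$ holds identically. Alternatively, $A$ is defined pathwise, since the stochastic integral has a version that works for all starting points. By uniqueness in the Doob--Meyer decomposition, this $A$ coincides with $A^f$. So $A^f$ is a PCAF, and $M_t=f(x)+M^f_t$ with $A_t=A^f_t$.

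The delicate point is choosing a single version of $A$ that is additive for all $x$ outside one exceptional set. I expect to handle it by the standard perfection argument for additive functionals (Blumenthal--Getoor): first show additivity holds $\Prb^x$-a.s.\ for each fixed $t,s$, then use continuity to get it for all $t,s$ simultaneously.
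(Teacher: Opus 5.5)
Your proposal is correct, and it takes a different and more complete route than the paper.

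\textbf{The paper's route.} The paper reduces to linear growth and asserts that $f(W^x_t)$ is a local submartingale ``because $f$ is convex''. It then upgrades this to a true submartingale by checking class (DL), using the uniform bound $\E[(A\norm{W^x_\tau}+B)^2]\le C_1t+C_2$ over stopping times $\tau\le t$ together with Cauchy--Schwarz. It gives no argument for continuity or additivity of $A$, so the PCAF part of the statement is only asserted.

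\textbf{Your route.} You get the full submartingale property directly from conditional Jensen and independent increments. This is also the natural justification of the paper's unproved ``local submartingale'' claim, and it makes the localize-then-upgrade step unnecessary. You use class (D), via domination by $\sup_{s\le T}|f(W^x_s)|$, only to invoke Doob--Meyer, and this handles polynomial growth without the WLOG reduction. Your mollification step, $A^\varepsilon_t=\frac12\int_0^t\Delta f_\varepsilon(W_s)\,ds\to f(W_t)-f(W_0)-\int_0^t\nabla f(W_s)\cdot dW_s$, actually proves the PCAF property. As a bonus it identifies $M^f=\int_0^\cdot\nabla f(W_s)\cdot dW_s$. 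This is the same approximation the paper only uses later, in the proof of \cref{rep_thm}.

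\textbf{The version issue.} The cost of your route is length plus the version issue you flag, but that issue is lighter than a general perfection argument. Since $\nabla f_\varepsilon\to\nabla f$ in measure with polynomial bounds uniform in $\varepsilon$, and $y\mapsto\int_0^T p_s(x,y)\,ds$ is uniformly integrable for $x$ in a compact set $K$, you get $\sup_{x\in K}\E^x[\sup_{t\le T}|A^\varepsilon_t-A_t|]\to0$. So one fast subsequence $\varepsilon_n$ makes $A^{\varepsilon_n}$ converge locally uniformly $\Prb^x$-a.s.\ for every $x$ simultaneously. Define $A$ as this pathwise limit of Lebesgue integrals. The convergence set $\Lambda$ then satisfies $\theta_t\Lambda\subset\Lambda$, because $A^\varepsilon_s\circ\theta_t=A^\varepsilon_{t+s}-A^\varepsilon_t$ holds identically. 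Hence additivity holds on $\Lambda$ for all $s,t$ at once, with no exceptional sets depending on $x$.
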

\begin{proof}
WLOG, we assume that $f$ is at most linear growth: $f(x) \leq A\norm{x}+B$. Because $f$ is convex function, $f(W_t^x)$ is a local sub-martingale. We focus on proving $f(W_t^x)$ is a proper sub-martingale. To do so, we only need to prove that the process $f(W_t^x)$ is in (DL) class. In other word, for a fixed $t$, we prove that $\set{f(W_{\tau}^x)| \text{ stopping time } \tau \leq t}$ is uniformly integrable. Now since $f(W_{\tau}^x) \leq (AW_{\tau}+B)^2$, we have $$\E^x[f(W^x_{\tau})^2] \leq \E[(AW^x_{\tau}+B)^2] \leq C_1 t+ C_2 < \infty$$
Then two conditions of uniform integrability follow easily by Cauchy-Schwarz theorem. 
\end{proof}
\vspace{2mm}

We now introduce Revuz measure and weak second-derivative of the convex function $f$. At the end of this section, we will show a relation between these objects (see \cref{trace_lem}). 
\vspace{2mm}

\begin{defi} (Revuz measure)
Suppose that we are given a PCAF $A$ of $W$. Let $m$ be the Lebesgue measure, and $(f.A)_t = \displaystyle\int_0^t f(W_s)dA_s$. We define the Revuz measure of $A$ as
$$ v_A(f) = \lim_{t \downarrow 0} \frac{1}{t}\E^m[(f.A)_t]$$
\end{defi}
\vspace{2mm}

\begin{rmk}
The Revuz measure $v_A$ is also a Radon measure. Moreover, since $m$ is invariant measure of $W$,  $v_A(f) = \E^m[(f.A)_1]$. Thus, $v_A$ is a $\sigma$-finite Radon measure on $\mathbb{R}^d$.
\end{rmk}
\vspace{2mm}

\begin{defi} (Weak second-derivative of convex function)
Given the convex function $f$, let $p(x)$ be a choice of the sub-gradient. Define the linear functional $T_{ij}$ on $C_c^{\infty}(\mathbb{R}^d)$ so that $(T_{ij}, \varphi):= (f, (\varphi)_{ij})$. Then $T_{ij}$ is a positive linear functional and is uniquely determined by a Radon measure $\mu_{ij}$ on $\mathbb{R}^d$ such that $(T_{ij}, \varphi) = \int \varphi \mu_{ij}(dx)$. By packing $\mu_{ij}$ together, we get the matrix-valued measure $\mu$. We called $\mu$ the second derivative measure of $f$.
\end{defi}

\begin{defi}
Let $Q(x)$ be the Radon-Nikodym derivative of the second derivative Radon measure $\mu$: $Q(x)_{ij} = d(\mu_{ij})/dx$. By an approximation argument, we can show that $Q(x)$ is non-negative definite for a.e $x$.
\end{defi}

We use the notations $f, \mu, p, Q$ in the subsequent \cref{EAtdividet}, \cref{chain_rule}, \cref{directional_derivative_exist}, \cref{ito_lemma}. From now on, we assume that $f$ has a polynomial growth. To estimate $f(W_t)$ and $f(y)$ in \cref{directional_derivative_exist} and \cref{ito_lemma}, we assume the linear growth on $f$, but our estimate is still valid for polynomial growth case. Next we present an important theorem that represents expectation of the PCAF component of $f(W_t^x)$ in terms of the Revuz measure $v_A$. A direct consequence of this theorem is \cref{trace_lem}. 

\begin{thm}(Representation theorem) \label{rep_thm}
Let $A$ be a PCAF of the Brownian motion $W$ with respect to the convex function $f$ as describe above. Let $\innerprod{v, g} = \int g(x)v(dx)$ for a function $g$ and measure $v$, and $(., .)$ be the usual inner product between $2$ functions. Then we have the following identity for any $C^2$ function $h$ with compact support:
\begin{align}\label{rep_id_rev}
(h, \E^{x}[A_t]) = \int_{0}^t \innerprod{v_A, p_sh}ds
\end{align}
\end{thm}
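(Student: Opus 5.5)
We prove the representation formula \eqref{rep_id_rev} by splitting $[0,t]$ into $n$ equal pieces, using the additivity of $A$ and the Markov property on each piece, and then recovering the Revuz measure from the $1/t$ limit in its definition. Here $p_s$ denotes the Brownian heat semigroup $p_s h(x)=\E^x[h(W_s)]$, which is symmetric with respect to $m$. The left side is
$$(h,\E^x[A_t])=\int h(x)\E^x[A_t]\,dx=\E^m[h(W_0)A_t].$$
This expression is finite: by \cref{doobmeyer_of_f}, $\E^x[A_t]=\E^x[f(W_t)]-f(x)$, and the polynomial growth of $f$ bounds this locally uniformly in $x$.

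\textbf{Step 1: reverse time.} The first step is to move $h$ from the start of the path to the end. Brownian motion under $\E^m$ is reversible, because $m$ is invariant and $p_s$ is self-adjoint in $L^2(m)$. So the law of $(W_{t-s})_{s\le t}$ under $\Prb^m$ equals that of $(W_s)_{s\le t}$. An additive functional $A$ is mapped by time reversal to a PCAF with the same increments on $[0,t]$. In particular $A_t$ is invariant, since it is a limit of Riemann-type sums of $f$ along the path, or can be identified through the Doob–Meyer compensator. This gives
$$\E^m[h(W_0)A_t]=\E^m[h(W_t)A_t].$$

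\textbf{Step 2: expand $A_t$ into small increments.} For a partition $0=t_0<\dots<t_n=t$, additivity gives
$$\E^m[h(W_t)A_t]=\sum_k \E^m\bigl[h(W_t)(A_{t_{k+1}}-A_{t_k})\bigr].$$
Conditioning on $\mathcal F_{t_{k+1}}$ and using the Markov property, each summand becomes
$$\E^m\bigl[(p_{t-t_{k+1}}h)(W_{t_{k+1}})(A_{t_{k+1}}-A_{t_k})\bigr].$$
Next apply the shift $\theta_{t_k}$, so that $A_{t_{k+1}}-A_{t_k}=A_{\Delta}\circ\theta_{t_k}$ with $\Delta=t_{k+1}-t_k$, and use the invariance of $m$. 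The summand becomes
$$\E^m\bigl[g_k(W_\Delta)A_\Delta\bigr],\qquad g_k=p_{t-t_{k+1}}h.$$

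\textbf{Step 3: pass to the Revuz measure.} Since $g_k$ is uniformly continuous and bounded, $g_k(W_\Delta)A_\Delta$ differs from $\int_0^\Delta g_k(W_s)\,dA_s=(g_k.A)_\Delta$ by at most $\omega_{g_k}(\sup_{s\le\Delta}|W_s-W_0|)\,A_\Delta$, where $\omega_{g_k}$ is a modulus of continuity. By the definition of $v_A$ and the remark that $\E^m[(g.A)_\Delta]=\Delta\,\innerprod{v_A,g}$ exactly, by stationarity, each summand equals
$$\Delta\,\innerprod{v_A,p_{t-t_{k+1}}h}+\text{error}.$$
Summing over $k$ gives a Riemann sum of $s\mapsto\innerprod{v_A,p_{t-s}h}$. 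This map is continuous in $s$ because $v_A$ is Radon and $p_sh$ has Gaussian tails. A change of variables $s\mapsto t-s$ then yields $\int_0^t\innerprod{v_A,p_sh}\,ds$.

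\textbf{Main obstacle.} The hardest step is controlling the error term uniformly, i.e.\ showing
$$\sum_k \E^m\bigl[\omega(\sup_{s\le\Delta}|W_s-W_0|)\,A_\Delta\circ\theta_{t_k}\bigr]\to0.$$
The difficulty is that $h$ has compact support but $g_k$ does not, and $\E^m[A_\Delta]$ is infinite globally. I would first localize using the Gaussian decay of $p_sh$ away from $\supp h$, together with the growth bound on $\E^x[A_t]$ inherited from $f$. Then I would apply Cauchy–Schwarz with an $L^2$ bound on $A_\Delta$, obtained from $A_\Delta=f(W_\Delta)-f(W_0)-M_\Delta$ as in \cref{doobmeyer_of_f}. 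If the time-reversal step proves delicate, the fallback is to avoid it: write $(h,\E^\cdot[A_t])=\lim_{\epsilon\to0}\frac1\epsilon\int_0^t\E^m[h(W_0)\,\cdots]$ and use the symmetry of $p_s$ directly on each increment.
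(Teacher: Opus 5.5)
Your route differs from the paper's. The paper does not prove the identity directly for $A$. It mollifies $f$ and gets the identity for $f_n$ from It\^o's formula ($v_{A^n}$ is half the Laplacian of $f_n$ times $dy$), Fubini and symmetry of $p_s$. It then passes to the limit using the Carlen--Protter stability of the decomposition and a two-sided inequality. Your partition/Markov/stationarity argument works with $A$ and the definition of $v_A$ directly, in the spirit of the standard proof of the Revuz correspondence for symmetric processes. It can be made to work, but the tool you propose does not close the step you flag as the main obstacle. There are $t/\Delta$ error terms, so each must be $o(\Delta)$. With $\omega_{g_k}(r)\le\norm{\nabla g_k}_\infty r$, Cauchy--Schwarz gives
\[
\E^x\Bigl[\sup_{s\le\Delta}|W_s-x|\,A_\Delta\Bigr]\le\Bigl(\E^x\bigl[\sup_{s\le\Delta}|W_s-x|^2\bigr]\Bigr)^{1/2}\bigl(\E^x[A_\Delta^2]\bigr)^{1/2}.
\]
The decomposition $A_\Delta=f(W_\Delta)-f(W_0)-M_\Delta$ only yields $\E^x[A_\Delta^2]\le C\Delta$. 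Both pieces are of size $\sqrt\Delta$, and estimating them separately discards the cancellation that makes $A_\Delta$ small. So your bound on each term is only $O(\Delta)$, and your bound on the total error is $O(t)$, not $o(1)$. Pointwise bounds in $x$ cannot do better: for $f(y)=|y_1|$ and $x_1=0$, $A_\Delta$ is a local time with $\E^x[A_\Delta]\asymp\sqrt\Delta$. The smallness you need appears only after averaging in $x$.

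Either of two ideas fixes this.

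(i) After your Step 1 the test function sits at the terminal time, so condition inside the $dA$-integral. Since $A$ is adapted, continuous and increasing, for $h\ge0$ one has $\E^m[g(W_\Delta)A_\Delta]=\E^m\bigl[\int_0^\Delta(p_{\Delta-s}g)(W_s)\,dA_s\bigr]$. Moreover $|p_{\Delta-s}g-g|\le\frac{\Delta}{2}\Phi$ with $\Phi:=\sup_{r\le 2t}p_r|\mathrm{tr}\,D^2h|$, which is bounded with Gaussian tails. The error per piece is then at most $\frac{\Delta}{2}\E^m[(\Phi.A)_\Delta]=\frac{\Delta^2}{2}v_A(\Phi)$. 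You need not even cut $A_t$ into pieces: $\E^m[h(W_t)A_t]=\E^m\bigl[\int_0^t(p_{t-s}h)(W_s)\,dA_s\bigr]$, and a step approximation of $s\mapsto p_{t-s}h$ finishes.

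(ii) Without time reversal (your fallback, where the test function sits at time $0$), split on $\set{\sup_{s\le\Delta}|W_s-W_0|\le\eta}$. On this event the error is $\lesssim\eta\,\E^m[\psi(W_0)A_\Delta]$ for a smooth weight $\psi$ with Gaussian tails. The averaged bound $\E^m[\psi(W_0)A_\Delta]=(\psi,p_\Delta f-f)=(p_\Delta\psi-\psi,f)=O(\Delta)$ follows from symmetry of $p_\Delta$. Cauchy--Schwarz is then needed only on the complement, whose probability is $O(e^{-c\eta^2/\Delta})$.

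Finally, your justification of Step 1 does not work as written. The Doob--Meyer compensator is defined relative to the forward filtration and says nothing about the reversed path. The Riemann sums $\sum_k[f(W_{t_{k+1}})-f(W_{t_k})-\innerprod{p(W_{t_k}),W_{t_{k+1}}-W_{t_k}}]$ are not reversal-symmetric: reversal turns each term into the Bregman divergence in the other direction. The identity $\E^m[h(W_0)A_t]=\E^m[h(W_t)A_t]$ is true, but you must cite or prove it. One way is via the discrete compensators $\sum_k(p_\Delta f-f)(W_{t_k})$, which converge to $A_t$ and are reversal-symmetric up to endpoint terms.
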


\begin{proof}
It is enough to prove \eqref{rep_id_rev} for nonnegative $h\in C_c^\infty$, since the general $C_c^2$ case then follows by linearity and density. As in the sequel, we first treat the linear-growth case $f(x)\le C(1+\|x\|)$; the polynomial-growth case is obtained by the same truncation argument.

Let $f_n=f*\phi_{\varepsilon_n}$ be the standard convex mollifications, with $\varepsilon_n\downarrow0$. Then $f_n\in C^\infty$, $f_n$ is convex, $f_n\to f$ locally uniformly, and for $n$ large, $f_n(x)\le C(1+\|x\|)$. Write:

\[
f_n(W_t^x)=f_n(x)+M_t^n+A_t^n,\qquad
f(W_t^x)=f(x)+M_t+A_t
\]

for the corresponding Doob--Meyer decompositions. For smooth $f_n$, It\^o's formula gives us:

\[
A_t^n=\frac12\int_0^t \Delta f_n(W_s^x)\,ds,
\qquad 
v_{A^n}(dy)=\frac12 \Delta f_n(y)\,dy .
\]

Hence, if
\[
g_t(y):=\int_0^t p_sh(y)\,ds,
\]
Then by Fubini and symmetry of the Brownian semigroup, we have:
\begin{align}
(h,\E^x[A_t^n])
&=\frac12\int_{\mathbb R^d} h(x)\int_0^t \E^x[\Delta f_n(W_s)]\,ds\,dx \notag\\
&=\frac12\int_0^t\int_{\mathbb R^d} \Delta f_n(y)\,p_sh(y)\,dy\,ds \notag\\
&=\int_0^t \langle v_{A^n},p_sh\rangle\,ds
= v_{A^n}(g_t). \label{eq:smooth_rep}
\end{align}

We next pass to the limit $n\to\infty$. First, on every compact set $K$,
\[
\E^x[A_t^n]-\E^x[A_t]
=\E^x[f_n(W_t)-f(W_t)]-(f_n(x)-f(x)).
\]

Using the local uniform convergence $f_n\to f$, the common linear-growth bound, and a standard decomposition of the domain into $\{W_t\in B_R\}$ and $\{W_t\notin B_R\}$, one obtains
\[
\sup_{x\in K}\big|\E^x[A_t^n]-\E^x[A_t]\big|\to0.
\]
In particular, since $\supp(h)$ is compact,
\begin{equation}\label{eq:l1lhsconv}
(h,\E^x[A_t^n])\to (h,\E^x[A_t]).
\end{equation}

Second, by applying the stability of convex semimartingale decompositions under locally uniform approximation (see \cite{carlen_protter_1992} for proof details) to $W_{t\wedge1}$, and then localizing by the exit time $T_R=\inf\{s:\|W_s^x\|>R\}$, we get:
\[
\E^x\!\left[\sup_{s\le 1\wedge T_R}|A_s^n-A_s|\right]\to0.
\]

Combining this with integration by parts shows that for every nonnegative bounded $g\in C^2$ with bounded first and second derivatives, and every compact $D$,
\begin{equation}\label{eq:gAconv}
\int_D \E^x[(g.A^n)_1]\,dx \to \int_D \E^x[(g.A)_1]\,dx .
\end{equation}

We now prove the two inequalities.

\medskip
\noindent\emph{Step 1:}
\[
(h,\E^x[A_t])\ge \int_0^t \langle v_A,p_sh\rangle\,ds .
\]
Since $h\ge0$, also $g_t\ge0$. For every compact $D$,
\[
v_{A^n}(g_t)=\E^m[(g_t.A^n)_1]\ge \int_D \E^x[(g_t.A^n)_1]\,dx .
\]
Using \eqref{eq:smooth_rep}, \eqref{eq:l1lhsconv}, and then \eqref{eq:gAconv}, we obtain:
\[
(h,\E^x[A_t])
=\lim_{n\to\infty} v_{A^n}(g_t)
\ge \int_D \E^x[(g_t.A)_1]\,dx .
\]
Letting $D\uparrow\mathbb R^d$ and using monotone convergence, we get:
\[
(h,\E^x[A_t])\ge \E^m[(g_t.A)_1]=v_A(g_t)
=\int_0^t \langle v_A,p_sh\rangle\,ds .
\]

\medskip
\noindent\emph{Step 2:}
\[
\int_0^t \langle v_A,p_sh\rangle\,ds \ge (h,\E^x[A_t]).
\]

Fix a ball $D$ containing $\supp(h)$, choose $l_D\in C_c^\infty$ with
$0\le l_D\le1$ and $l_D\equiv1$ on $D$, and set
\[
g_{t,D}:=l_D g_t\in C_c^\infty .
\]
For smooth $f_n$, since $v_{A^n}(dy)=\frac12\Delta f_n(y)\,dy$, Fubini and It\^o's formula up to the exit time $\tau_D$ from $D$ yield:
\[
v_{A^n}(g_{t,D})
\ge (h,\E^x[f_n(W_{t\wedge\tau_D})]-f_n(x)).
\]
Moreover, $g_{t,D}$ has compact support, so by the same localization argument behind \eqref{eq:gAconv},
\[
v_A(g_{t,D})\ge \limsup_{n\to\infty} v_{A^n}(g_{t,D}).
\]
Passing to the limit and using $f_n\to f$ uniformly on $D$, we have:
\[
v_A(g_t)\ge v_A(g_{t,D})
\ge (h,\E^x[f(W_{t\wedge\tau_D})]-f(x)).
\]

Finally, because $D\supset \supp(h)$ and $f$ has linear growth,
\[
(h,\E^x[f(W_t)-f(W_{t\wedge\tau_D})])\to0
\qquad \text{as } D\uparrow\mathbb R^d,
\]
by a standard Gaussian-tail/Doob-maximal estimate for Brownian motion. As a result,

\[
v_A(g_t)\ge (h,\E^x[f(W_t)]-f(x))=(h,\E^x[A_t]).
\]

Combining the two steps gives
\[
(h,\E^x[A_t])=v_A(g_t)
=\int_0^t \langle v_A,p_sh\rangle\,ds ,
\]
which is \eqref{rep_id_rev}.
\end{proof}

\begin{corollary}\label{EAtdividet_formula}
For a.e $x$, we have:
$$\frac{\E^x[A_t]}{t} = \frac{1}{t}\int_0^t \int_{\mathbb{R}^d}\frac{1}{(2\pi s)^{d/2}}e^{-\norm{x-y}^2/2s}v_A(dy)ds$$
\end{corollary}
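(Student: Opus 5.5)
We derive this from the Representation theorem (\cref{rep_thm}) by unwinding the right-hand side of \eqref{rep_id_rev} with Fubini and then using the fact that a locally integrable function is determined a.e.\ by its pairings with test functions. Throughout, $p_s$ is the Brownian semigroup with symmetric kernel
\[
p_s(x,y)=(2\pi s)^{-d/2}e^{-\norm{x-y}^2/2s}.
\]

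\textbf{Step 1: rewrite the right-hand side.} Fix $h\in C_c^2$ with $h\ge 0$. By definition $p_sh(y)=\int h(x)p_s(x,y)\,dx$. All integrands are nonnegative, so Tonelli applies:
\[
\int_0^t\innerprod{v_A,p_sh}\,ds=\int_{\mathbb R^d}h(x)\,G_t(x)\,dx,
\qquad
G_t(x):=\int_0^t\int_{\mathbb R^d}p_s(x,y)\,v_A(dy)\,ds .
\]
Combined with \eqref{rep_id_rev}, this gives
\[
\int h(x)\,\E^x[A_t]\,dx=\int h(x)\,G_t(x)\,dx
\]
for every nonnegative $h\in C_c^2$.

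\textbf{Step 2: finiteness of both sides.} We need $\E^x[A_t]$ and $G_t$ to be locally integrable, so that the identity is between finite quantities. For $\E^x[A_t]$, use $\E^x[A_t]=\E^x[f(W_t)]-f(x)$. This is continuous in $x$ under the growth assumption, hence locally bounded. For $G_t$, local integrability then follows from the identity itself: take $h\ge 1$ on a given compact set, and the left side is finite.

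\textbf{Step 3: identify the functions a.e.} By linearity (split a general $h$ into differences of nonnegative ones), the identity extends to all $h\in C_c^2$. In particular it holds for all $h\in C_c^\infty$. So the locally integrable functions $\E^x[A_t]$ and $G_t$ define the same distribution. By the fundamental lemma of the calculus of variations they agree Lebesgue-a.e. Dividing by $t$ gives the stated formula for a.e.\ $x$.

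The exceptional null set may depend on $t$. If one wants a single null set for all $t$, first restrict to rational $t$, then use monotonicity in $t$ of both sides together with continuity: $\E^x[A_t]$ is continuous in $t$ by dominated convergence, and $G_t$ is monotone and continuous in $t$ by monotone convergence.

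\textbf{Main obstacle.} The only delicate point is justifying the Fubini/Tonelli interchange and the local integrability in Step 2. Since $v_A$ is only $\sigma$-finite, we must check that $G_t$ is not identically infinite. Restricting first to nonnegative $h$ resolves this, because the identity then forces finiteness.
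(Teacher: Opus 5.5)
Your proposal is correct and is exactly the argument the paper leaves implicit: the corollary is stated without proof as an immediate consequence of \cref{rep_thm}, obtained by writing $\innerprod{v_A,p_sh}$ through the symmetric heat kernel, applying Tonelli, and identifying the two locally integrable functions of $x$ almost everywhere. Your extra care on two points is sound and worth keeping: restricting first to $h\ge 0$ settles finiteness, and using rational $t$ plus monotonicity and continuity gives a null set independent of $t$, which is what the later use in \cref{EAtdividet} (letting $t\to 0$ at a fixed $x$) actually requires.
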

\vspace{2mm}

Before proving the connection between the Revuz measure and the (weak) second derivative of $f$, we need the following technical lemma:
\begin{lem}\label{sato_lem}(A special case of Van der Vaart's theorem)
For a measure $\mu$ and a function $\psi(x) = (2\pi)^{-d/2} e^{-\norm{x}^2/2}$ on $\mathbb{R}^d$, denote $\psi_T(x) = T^d \psi (Tx)$. We have:

$$\lim_{T \to \infty} \int_{\mathbb{R}^d} \psi_T(x-y)\mu(dy) = \frac{d \mu}{dm}(x)$$
for a.e $x$. Here $\frac{d \mu}{dm}$ is the Radon-Nikodym derivative of measure $mu$ with respect to Lebesgue measure $m$.
\end{lem}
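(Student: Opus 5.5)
We prove Lemma~\ref{sato_lem} by splitting $\mu$ into its absolutely continuous and singular parts and treating the convolution with the Gaussian approximate identity $\psi_T$ at Lebesgue points. Since $\mu$ here is a Radon (locally finite) measure, we first localize: fix a ball $B$ and write $\mu = \mu|_{2B} + \mu|_{(2B)^c}$. For $x\in B$ and $\norm{x-y}\ge r>0$, the kernel satisfies $\psi_T(x-y)\le T^d e^{-T^2 r^2/4}e^{-T^2\norm{y}^2/c}$ (up to constants) for $\norm{y}$ large. So, provided $\mu$ has at most polynomial growth (which holds for the measures $v_A$ and $\mu_{ij}$ used in the paper, since $f$ has polynomial growth), the contribution of $\mu|_{(2B)^c}$ tends to $0$ uniformly on $B$. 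It therefore suffices to treat a finite measure $\mu$. By Lebesgue decomposition, write $\mu = g\,m + \mu_s$ with $g\in L^1$ and $\mu_s\perp m$.

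\emph{Absolutely continuous part.} The key observation is that $\psi$ is radial and decreasing, so $\psi = \int_0^\infty \mathbf 1_{B(0,r)}(\cdot)\,(-\phi'(r))\,dr$ with $\psi(x)=\phi(\norm{x})$. This layer-cake representation gives the maximal bound
\[
\sup_T |\psi_T * g|(x) \le \norm{\psi}_{L^1}\, Mg(x),
\]
where $M$ is the Hardy--Littlewood maximal function, and moreover
\[
|\psi_T*g(x)-g(x)| \le \int_0^\infty (-\phi'(r))\, m(B(0,r))\, r^{d}\ldots
\]
More precisely, after rescaling, the difference is bounded by a weighted average over radii $r/T$ of $\frac{1}{m(B(x,r/T))}\int_{B(x,r/T)}|g(y)-g(x)|\,dy$. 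At every Lebesgue point $x$ of $g$ these averages tend to $0$ as $T\to\infty$ and are dominated by $Mg(x)+|g(x)|<\infty$, so dominated convergence in $r$ gives $\psi_T*g(x)\to g(x)$ for a.e.\ $x$.

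\emph{Singular part.} We must show $\psi_T*\mu_s(x)\to 0$ for $m$-a.e.\ $x$. The same layer-cake bound gives
\[
\psi_T*\mu_s(x)\le \int_0^\infty(-\phi'(r))\, m(B(0,r))\,\frac{\mu_s(B(x,r/T))}{m(B(x,r/T))}\,dr .
\]
We invoke the standard differentiation theorem for singular measures: $\mu_s(B(x,\rho))/m(B(x,\rho))\to 0$ as $\rho\to0$ for $m$-a.e.\ $x$. This is proved by a Vitali covering argument: for a compact $K$ with $\mu_s(K^c)$ small and $m(K)=0$, the weak-type estimate $m\{x\notin K: \limsup>\lambda\}\le 3^d\mu_s(K^c)/\lambda$ applies.

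\emph{Main obstacle.} The delicate point is exchanging the limit with the $r$-integral for large $r$ (small $r$ is fine). For $\rho=r/T$ bounded away from $0$ we cannot use the pointwise limit directly. Instead, we split the $r$-integral at $r=\sqrt T$. For $r\le\sqrt T$ we have $\rho\le T^{-1/2}\to0$, so the ratio tends to $0$ uniformly on that range, and the weight $\int(-\phi')m(B(0,r))\,dr<\infty$ controls it. For $r>\sqrt T$ we use the crude bound $\mu_s(\mathbb R^d)\,T^d\psi(\sqrt T\,\cdot)$-type tails: $\int_{\sqrt T}^\infty(-\phi'(r))\,T^d\,\mu_s(\mathbb R^d)\,dr\lesssim T^de^{-T/2}\to0$. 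The same splitting is also what makes the absolutely continuous case rigorous. Combining the two parts yields the lemma for a.e.\ $x$.
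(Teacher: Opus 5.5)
Your argument is correct, and it takes a different route from the paper. The paper gives no argument: it defers entirely to Bourgain--Sato's direct proof of van der Vaart's theorem. You instead give the classical self-contained approximate-identity proof. You write the radial decreasing Gaussian as a layer-cake superposition of normalized ball averages. For the absolutely continuous part you use Lebesgue points plus the Hardy--Littlewood maximal bound. For the singular part you use the a.e.\ vanishing of $\mu_s(B(x,\rho))/m(B(x,\rho))$ via a Vitali covering argument.

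The citation is shorter, but your route buys two things. It uses only standard real analysis. It also exposes a hypothesis the statement silently omits: for an arbitrary Radon measure the lemma is false. For example, $\mu(dy)=e^{\norm{y}^3}dy$ makes every convolution $\psi_T*\mu$ identically infinite. So some growth condition is needed for your localization step, such as your polynomial growth or $\int e^{-a\norm{y}^2}\mu(dy)<\infty$. That condition does hold for $v_A$, the only measure the paper applies the lemma to. Test $v_A=\frac12\,\mathrm{tr}(\mu)$ against a cutoff $\varphi_R\ge \mathbf 1_{B(0,R)}$ supported in $B(0,2R)$ with $|\Delta\varphi_R|\lesssim R^{-2}$. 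This gives $v_A(B(0,R))\le \frac12\int f\,\Delta\varphi_R\lesssim R^{d-2}\sup_{B(0,2R)}|f|$, which is polynomial in $R$.

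Two minor remarks. First, the garbled display containing ``$r^{d}\ldots$'' should simply be replaced by the weighted-average bound you state right after it. Second, your ``main obstacle'' is not really one. Since $\mu_s(B(x,\rho))/m(B(x,\rho))\le M\mu_s(x)$, which is finite $m$-a.e.\ by the same weak-type estimate, dominated convergence in $r$ works on the whole range, exactly as in the absolutely continuous case. Your split at $r=\sqrt T$ is nonetheless valid, since the tail is bounded by $T^d\mu_s(\mathbb R^d)\,\phi(\sqrt T)\to 0$.
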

\begin{proof}
See \cite{bourgain_sato_1986} for a detailed proof.
\end{proof}
\vspace{2mm}

We're now ready to prove \cref{trace_lem}, which further helps us relate the analytic proprieties of $f$ with its probabilistic counterpart.
\begin{lem} (Revuz-Trace lemma)\label{trace_lem}
Let $A_t$ to be the PCAF part of Doob-Meyer decomposition of $f(X_t)$. Then the Revuz measure $v_A$ of $A$ coincides with half of the trace $\frac{1}{2}\sum_{i=1}^d \mu_{ii}$ of the second derivative $\mu$ of $f$.
\end{lem}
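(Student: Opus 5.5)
The plan is to identify $v_A$ by testing it against smooth compactly supported functions and using the Representation theorem (\cref{rep_thm}) twice. The first use compares the right-hand side with the distributional Laplacian of $f$. The second computes the left-hand side through the Doob--Meyer decomposition of \cref{doobmeyer_of_f}. Since both $v_A$ and $\frac12\sum_i\mu_{ii}$ are Radon measures, agreement on $C_c^\infty$ test functions suffices.

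\emph{Left-hand side.} Fix $h\in C_c^\infty$. Because $M_t$ is a true martingale, which is the class (DL) part of \cref{doobmeyer_of_f}, we have $\E^x[A_t]=\E^x[f(W_t)]-f(x)=p_tf(x)-f(x)$. By symmetry of the heat semigroup and Fubini (legitimate under the growth assumption),
\[
(h,\E^x[A_t])=(p_th-h,f)=\int_0^t\left(\tfrac12\Delta p_sh,\,f\right)ds .
\]
The last step uses $\partial_s p_sh=\frac12\Delta p_sh$. Next apply the definition of $\mu$: $(f,\partial_{ii}\varphi)=\int\varphi\,d\mu_{ii}$. This identity is stated for $\varphi\in C_c^\infty$, but $p_sh$ is a Schwartz function. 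We extend it by truncating $p_sh$ with cutoffs $l_R$ and passing to the limit, using the Gaussian decay of $p_sh$ and its derivatives against the polynomial growth of $f$; here the positivity of the $\mu_{ii}$ gives monotone control. The result is
\[
(h,\E^x[A_t])=\int_0^t\Big\langle \tfrac12\textstyle\sum_i\mu_{ii},\,p_sh\Big\rangle ds .
\]

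\emph{Comparison.} By \cref{rep_thm}, the same quantity equals $\int_0^t\langle v_A,p_sh\rangle ds$. Setting $\nu=\frac12\sum_i\mu_{ii}$, we get, for every $t>0$ and every $h\in C_c^\infty$,
\[
\int_0^t\langle v_A,p_sh\rangle ds=\int_0^t\langle \nu,p_sh\rangle ds .
\]
Divide by $t$ and let $t\downarrow0$. Since $p_sh\to h$ uniformly with uniformly Gaussian tails, and both measures are Radon with at most polynomial growth, $\frac1t\int_0^t\langle v_A,p_sh\rangle ds\to\langle v_A,h\rangle$, and likewise for $\nu$. Hence $\langle v_A,h\rangle=\langle\nu,h\rangle$ for all $h\in C_c^\infty$, so $v_A=\nu$.

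An alternative route for absolutely continuous parts, via \cref{EAtdividet_formula} and \cref{sato_lem}, would only give densities a.e., so I would use the test-function argument above.

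\emph{Main obstacle.} The hard step is justifying the limit $t\downarrow0$ and the extension from $C_c^\infty$ to Schwartz test functions $p_sh$. Both require a growth bound on $v_A$ and $\mu_{ii}$, for instance $v_A(B_R)\le CR^{k}$. I would obtain this from the fact that, for a convex $f$, $\mu_{ii}(B_R)$ is controlled by the oscillation of $f$ on $B_{2R}$: test against a cutoff $\varphi$ with $\|\partial_{ii}\varphi\|_\infty\lesssim R^{-2}$. The bound for $v_A$ then follows by taking $h$ to be a positive bump function in \cref{rep_thm}.
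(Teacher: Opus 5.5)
Your proof is correct and follows essentially the paper's route. Both use symmetry of the heat semigroup to rewrite $(h,\E^x[A_t])=(p_th-h,f)$, equate this with $\int_0^t\langle v_A,p_sh\rangle\,ds$ via \cref{rep_thm}, then divide by $t$ and let $t\downarrow0$.

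The only difference is on the $f$-side: the paper takes the limit directly, $\frac1t(f,p_th-h)\to(f,\frac12\Delta h)=\frac12\langle h,\sum_i\mu_{ii}\rangle$, so it never has to extend the defining identity of $\mu$ to the Schwartz functions $p_sh$ or bound $\mu_{ii}(B_R)$. Your growth bound on $v_A$ is still worthwhile, since it justifies the limit $\frac1t\int_0^t\langle v_A,p_sh\rangle\,ds\to\langle v_A,h\rangle$, which the paper uses without comment.
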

\begin{proof}
Consider non-negative $C^{\infty}$ function $h$ with compact support. First we have:
\begin{align*}
\lim_{t \to 0}\frac{1}{t}(f, p_t h - h) &= \left(f, \lim_{t \to 0}\frac{p_t h - h}{t} \right) = (f, \frac{1}{2}\Delta h)\\
&= \frac{1}{2}\sum_{i=1}^n (f, h_{ii}) = \frac{1}{2}\sum_{i = 1}^n \innerprod{h, \mu_{ii}} = \frac{1}{2}\innerprod{h, tr(\mu)}
\end{align*}
Moreover, by \cref{rep_thm}, for any $h \in C_c^{\infty} \geq 0$
\begin{align}\label{inter_iden_revuz_trace}
\begin{split}
&(f, p_th-h) = (f, p_th) - (f, h) = (p_tf, h) - (f, h) = (p_tf-f, h)\\ 
&= (\E^x[f(W_t)]-f(x), h) = (\E^x[A_t], h) = \int_0^t\innerprod{v_A, p_sh} ds = \innerprod{v_A, \int_0^t p_shds}
\end{split}
\end{align}
For every $h \in C^{\infty}$ so that $(f, h)$ and $(f, p_t h)$ are finite, \cref{inter_iden_revuz_trace} also holds for $h$.

$$\lim\limits_{t \to 0} \dfrac{(f, p_th-h)}{t} = \lim_{t \to 0}\ \innerprod{v_A, \frac{1}{t}\int_0^t p_sh ds} = \innerprod{v_A, h}$$

As a result, $\innerprod{h, v_A} = \frac{1}{2}\innerprod{h, tr(\mu)}$ for all non-negative $C_c^{\infty}$ function $h$. By linearity, $\innerprod{h, v_A} = \frac{1}{2}\innerprod{h, tr(\mu)}$ for any $C_c^{\infty}$ function $h$. Thus, as functionals, $v_A$ and $\frac{1}{2}tr(\mu)$ are coincide on a dense set of its domain and, therefore, must be equal.
\end{proof}
\section{Ito's lemma and stochastic calculus for convex function}
In this section, given the convex function $f$, we use the previous Doob-Meyer decomposition of $f(W_t)$ to prove a version of Ito's lemma for convex function. We need to prove a couple of technical lemmas including \cref{EAtdividet}, \cref{chain_rule}, and \cref{directional_derivative_exist} in order to prove the Ito's lemma in \cref{ito_lemma}
\begin{lem}\label{EAtdividet}
For a.e $x$, we have the following identity:
$$\lim_{t \to 0} \frac{1}{t}\E[|f(W_t^x) -f(x) - \innerprod{p(x),W_t}|] = \frac{1}{2}tr(Q(x))$$
\end{lem}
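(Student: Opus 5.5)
The plan is to get rid of the absolute value by convexity and then evaluate the resulting expectation with \cref{EAtdividet_formula} and \cref{sato_lem}. Since $p(x)$ is a subgradient of $f$ at $x$, we have $f(W_t^x)-f(x)-\innerprod{p(x),W_t}\ge 0$ pointwise. The absolute value can therefore be dropped. Because $\E[\innerprod{p(x),W_t}]=0$ (Gaussian moments are finite, and $f$ has linear growth, so everything is integrable), we get
\[
\E\big[|f(W_t^x)-f(x)-\innerprod{p(x),W_t}|\big]=\E^x[f(W_t)]-f(x)=\E^x[A_t],
\]
using the Doob--Meyer decomposition of \cref{doobmeyer_of_f} and the fact that $M_t$ is a true martingale with $M_0=0$. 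Hence it suffices to show that $\frac1t\E^x[A_t]\to\frac12 tr(Q(x))$ for a.e. $x$.

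For this we use \cref{EAtdividet_formula}, which gives, for a.e. $x$,
\[
\frac{\E^x[A_t]}{t}=\frac1t\int_0^t (\psi_{s^{-1/2}}*v_A)(x)\,ds .
\]
Here $\psi_T(z)=T^d\psi(Tz)$ is as in \cref{sato_lem}, and the Gaussian heat kernel at time $s$ is exactly $\psi_{T}$ with $T=s^{-1/2}$. By \cref{sato_lem}, $(\psi_{s^{-1/2}}*v_A)(x)\to \frac{dv_A}{dm}(x)$ as $s\downarrow 0$, for a.e. $x$. By \cref{trace_lem}, $v_A=\frac12 tr(\mu)$, so $\frac{dv_A}{dm}=\frac12\sum_i d\mu_{ii}/dx=\frac12 tr(Q(x))$ a.e. Finally, a Cesàro average of a function converging as $s\to0$ converges to the same limit, since $\frac1t\int_0^t g(s)\,ds\to \lim_{s\to0}g(s)$. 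This yields the claim.

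The main obstacle is the a.e. bookkeeping. The exceptional null set in \cref{EAtdividet_formula} (where the representation holds only in the $L^1_{loc}$/distributional sense via \cref{rep_thm}) must be combined with the null set from \cref{sato_lem}. I would handle this by noting that both sides of the corollary are continuous in $x$ for fixed $t>0$: the left side is $p_tf(x)-f(x)$, and the right side is a heat-kernel average of a Radon measure, which is finite by the polynomial growth of $f$ and the local finiteness of $v_A$. The identity then holds for every $x$ and every $t$, and only the Lebesgue-differentiation null set of \cref{sato_lem} remains. One should also check that the time integral near $s=0$ is harmless. This follows because, at a point where the limit exists and is finite, the integrand is bounded for small $s$.
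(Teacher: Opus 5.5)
Your proposal is correct and is essentially the paper's proof: you drop the absolute value via the subgradient inequality, reduce to $\E^x[A_t]/t$, and combine \cref{EAtdividet_formula}, \cref{sato_lem} with $T=s^{-1/2}$, and \cref{trace_lem}, while also spelling out the Ces\`aro step and the $t$-dependent exceptional set, both of which the paper leaves implicit. One small caution on that bookkeeping: the right-hand side of \cref{EAtdividet_formula} is a potential of $v_A$ against the kernel $\int_0^t (2\pi s)^{-d/2}e^{-\norm{z}^2/2s}\,ds$, which is singular at $z=0$ when $d\ge 2$, so finiteness alone does not give continuity in $x$. It is simpler to take the union of the exceptional null sets over rational $t$ and argue by continuity in $t$ for fixed $x$. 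The left side $p_tf(x)-f(x)$ is continuous in $t$, and the right side is the integral of a nonnegative function of $s$ that is finite at every rational $t$, hence continuous in $t$ as well.
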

\begin{proof}
Because $|f(W_t^x) -f(x) - p(x) W_t| = f(W_t^x) -f(x) - p(x) W_t \geq 0$, and $\E[\innerprod{p,W_t}] = 0$ for any vector $p$, we have:	
$$\lim_{t \to 0} \frac{1}{t}\E[|f(W_t^x) -f(x) - p(x) W_t|] = \lim_{t \to 0} \frac{1}{t} \E[f(W_t^x) -f(x)] = \lim_{t \to 0} \frac{\E[A_t]}{t}$$ 
By \cref{EAtdividet_formula} and \cref{sato_lem} for $v_A$ and $T = 1/\sqrt{t}$, 
\begin{align*}
\lim_{t \to 0} \frac{\E[A_t]}{t} &= \lim_{t \to 0} \frac{1}{t}\int_0^t \int_{\mathbb{R}^d}\frac{1}{(2\pi s)^{d/2}}e^{-\norm{x-y}^2/2s}v_A(dy)ds\\ 
&= \lim_{t \to 0}\int_{\mathbb{R}^d} \frac{1}{(2\pi t)^{d/2}}e^{-\norm{x-y}^2/2t}v_A(dy) = \frac{dv_A}{dm}(x)
\end{align*} 
By \cref{trace_lem}, the Revuz measure $v_A$ is exactly $\frac{1}{2} tr(\mu)$. Thus, $\frac{dv_A}{dm}(x) = \frac{1}{2} tr(Q(x))$ for a.e $x$. Combining with the two equalities above, we can finish our proof here.
\end{proof}
\vspace{2mm}

\begin{rmk}
Because the Revuz measure $v_A$ is $\frac{1}{2} tr(\mu)$ so that $\frac{1}{2} tr(Q(x))$ is in fact the Radon-Nikodym derivative of $v_A$. Moreover, by using \cref{EAtdividet}, and by splitting $Q$ into $Q = Q^+- Q^-$, we can easily prove Alexandrov theorem in 1D.
\end{rmk}
\vspace{2mm}

\begin{lem}\label{chain_rule}
Given an invertible matrix $S$, for a.e $x$, we have:
$$\lim_{t \to 0} \frac{\E[f(SW_t + x)]-f(x)}{t} = \lim_{t \to 0} \frac{\E[f(S(W_t+S^{-1}x )] - f(S(S^{-1}x))}{t} = \frac{1}{2}tr(S^TQ(x)S)$$
\end{lem}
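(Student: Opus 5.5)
Define $g(y) := f(Sy)$. Since $S$ is invertible and linear, $g$ is convex and still has (at most) linear, resp. polynomial, growth. So everything proved above for $f$ also holds for $g$: \cref{doobmeyer_of_f}, \cref{rep_thm}, \cref{trace_lem} and \cref{EAtdividet}. The first equality in the statement is the trivial identity $SW_t + x = S(W_t + S^{-1}x)$. For the second equality, apply \cref{EAtdividet} to $g$ at the point $z = S^{-1}x$. This gives
\[
\lim_{t\to 0}\frac{\E[g(W_t + z)] - g(z)}{t} = \tfrac12 tr(Q_g(z))
\]
for a.e.\ $z$, where $Q_g$ is the density of the second derivative measure $\mu_g$ of $g$. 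Here I use, as in the proof of \cref{EAtdividet}, that the linear term $\innerprod{p_g(z),W_t}$ has zero mean. Because $S^{-1}$ is a linear bijection, it maps Lebesgue null sets to null sets. Hence "for a.e.\ $z$" becomes "for a.e.\ $x$".

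It remains to identify $Q_g(S^{-1}x)$ with $S^TQ(x)S$ for a.e.\ $x$. I would first compute the distributional Hessian of $g$ by a change of variables. For $\varphi \in C_c^\infty$, set $\tilde\varphi(u) := \varphi(S^{-1}u)$. Then
\[
(g,\varphi_{ij}) = \int f(Sy)\varphi_{ij}(y)\,dy = |\det S|^{-1}\int f(u)\,\varphi_{ij}(S^{-1}u)\,du .
\]
By the chain rule, $\varphi_{ij}(S^{-1}u) = \sum_{k,l} S_{ki}S_{lj}\,\tilde\varphi_{kl}(u)$. Therefore
\[
(g,\varphi_{ij}) = |\det S|^{-1}\sum_{k,l}S_{ki}S_{lj}\int \tilde\varphi\, d\mu_{kl},
\]
which says that $\mu_g = |\det S|^{-1}\, (S^{-1})_\#\big(S^T\mu S\big)$ as matrix-valued measures. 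Now take Radon–Nikodym derivatives with respect to Lebesgue measure. The pushforward under $y = S^{-1}u$ contributes a Jacobian factor $|\det S|$, which cancels the prefactor. The result is $Q_g(y) = S^TQ(Sy)S$ for a.e.\ $y$.

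The singular part of $\mu$ needs a word. It is pushed forward to a measure that is still singular, because linear bijections preserve null sets. So it does not affect the densities.

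Combining the pieces gives $\tfrac12 tr(Q_g(S^{-1}x)) = \tfrac12 tr(S^TQ(x)S)$ for a.e.\ $x$, which is the claim.

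The main obstacle is this density identification. The sub-gradient and the test-function pairing definitions must be handled carefully. The chain-rule index bookkeeping ($S^T\mu S$ versus $S\mu S^T$) must be checked. And we must verify that the Radon–Nikodym derivative commutes with the linear pushforward up to the determinant factor. For the last point I would use the Lebesgue differentiation characterization, via \cref{sato_lem} or plain ball averages, on the image ellipsoids $S^{-1}(B_r(x))$. Alternatively, I could avoid differentiating measures directly and use the positivity/approximation argument behind the definition of $Q$.
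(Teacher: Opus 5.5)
Your proposal is correct and follows the paper's own route. The paper also computes the distributional Hessian of $f\circ S$ by change of variables, obtaining $\int\varphi(Sx)\,\mu^S(dx)=|\det S|^{-1}S^T\big(\int\varphi\,d\mu\big)S$, deduces $Q^S(y)=S^TQ(Sy)S$ a.e.\ by comparing absolutely continuous parts, and then invokes \cref{EAtdividet} for $f\circ S$ at $S^{-1}x$. Your handling of the singular part is cleaner than the paper's singular-support argument: you write the identity as a pushforward and note that linear bijections preserve Lebesgue-null sets.
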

\begin{proof}
First we show that for an invertible matrix $S$, if the second derivative Radon measure $\mu^S$ of $f(Sx)$ has the Radon-Nikodym derivative, $Q^S(x)$, then for a.e $x$,
\begin{equation}\label{density_eqn}
Q^S(x) = S^TQ(S(x)) S
\end{equation}
Take any function $\varphi \in C_c^{\infty}(\mathbb{R}^d)$, $\forall i, j \in \overline{1, n}$, we have:
$$\int \varphi(Sx)\mu^S_{ij}(dx) = \int f(Sx)(\varphi(Sx))_{ij}(x)dx= \int f(Sx)(S^TD^2\varphi S)_{ij}(Sx)dx$$
Putting these $d^2$ equations into $d \times d$ matrix, we get:
\begin{equation*}
\int\varphi(Sx)\mu^S(dx) = \frac{S^T}{|\det S|} \int f(y)D^2\varphi(y)dy S = \frac{S^T}{|\det S|}\int \varphi(y)\mu(dy)S
\end{equation*}
Since this identity is true for any smooth function, by bounded convergence theorem, it is also true for any bounded function with compact support. We can take $\varphi = 1_A$ for any bounded Borel set $A$ so that:
\begin{equation}\label{measure}
\mu^S(S^{-1}A) = \frac{S^T}{\det S}\mu(A)S
\end{equation}
Now consider any bounded Borel set $A$, which has no intersection with the singular support of $\mu$, and the rotation of the singular support of $\mu^S$ by a matrix $S$. The latter means that $S^{-1}A$ has no intersection with the singular support of $\mu^S$. Then \cref{measure} implies:
\begin{equation}\label{density_int_equal}
\int_{S^{-1}A} Q^S(x)dx = \frac{S^T}{\det S} \int_A Q(x)dx S =  S^T \int_{S^{-1}A} Q(y)dy S
\end{equation}
Note that \cref{density_int_equal} holds for any set $A$ that has no intersection with two null sets. Moreover, we can add to $A$ any set of measure zero so that \cref{density_int_equal} still holds. Therefore, \cref{density_int_equal} indeed holds for any bounded Lebesgue measurable set $A$. As a result, \cref{density_eqn} holds for a.e $x$, and we finish the proof by invoking \cref{EAtdividet}.
\end{proof}
\vspace{2mm}

\begin{lem}\label{directional_derivative_exist}
For each unit vector $v$, for a.e $x$ the following limit is $0$: 
$$\lim_{r \to 0}\frac{1}{r^2}|f(x + rv)-f(x)-p(x).rv - \frac{1}{2}\innerprod{Q(x)v, v}r^2| = 0$$
\end{lem}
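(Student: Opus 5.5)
The idea is to reduce the statement to a one-dimensional fact along lines parallel to $v$, and then use Fubini to turn ``a.e.\ on a.e.\ line'' into ``a.e.\ $x$''.

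Fix the unit vector $v$ and complete it to an orthonormal basis, so that $x=y+sv$ with $y\in v^\perp$ and $s\in\mathbb{R}$. For each $y$, set $g_y(s)=f(y+sv)$. This is a convex function of one real variable. Its right derivative exists everywhere and is nondecreasing, and its distributional second derivative is a nonnegative Radon measure $\nu_y$ on $\mathbb{R}$.

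\textbf{Step 1 (one-dimensional second-order expansion).} Write $\nu_y=q_y(s)\,ds+\nu_y^{s}$, its Lebesgue decomposition. For a.e.\ $s$, three things hold at once:
\begin{itemize}
\item[(a)] $g_y$ is differentiable at $s$;
\item[(b)] $s$ is a Lebesgue point of $q_y$;
\item[(c)] $\nu_y^{s}([s-r,s+r])/r\to0$ as $r\to0$.
\end{itemize}
At such a point,
$$g_y(s+r)-g_y(s)-g_y'(s)r=\int_0^r\big(g_y'(s+u)-g_y'(s)\big)\,du, \qquad g_y'(s+u)-g_y'(s)=\nu_y((s,s+u]).$$
By (b) and (c), $\nu_y((s,s+u])=q_y(s)u+o(u)$, uniformly for $|u|\le r$. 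Integrating gives
$$g_y(s+r)=g_y(s)+g_y'(s)r+\tfrac12 q_y(s)r^2+o(r^2).$$
This is the one-dimensional Alexandrov theorem anticipated in the Remark after \cref{EAtdividet}. I would prove it directly as above rather than through \cref{EAtdividet} with $d=1$, since that lemma only controls an averaged quantity.

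\textbf{Step 2 (identify the 1D data with $p$ and $Q$).} It remains to show that for a.e.\ $x=y+sv$ we have $g_y'(s)=\innerprod{p(x),v}$ and $q_y(s)=\innerprod{Q(x)v,v}$.

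For the first derivative: $f$ is differentiable a.e.\ (Rademacher, since $f$ is locally Lipschitz). At such points the subgradient $p(x)$ equals $\nabla f(x)$, so $g_y'(s)=\innerprod{p(x),v}$.

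For the second derivative: the directional measure $\mu_v:=\sum_{i,j}v_iv_j\mu_{ij}$ is the distributional $\partial_v^2 f$. Take a test function $\varphi(y,s)$ and apply Fubini:
$$\int \varphi\, d\mu_v=\int f\,\partial_s^2\varphi=\int_{v^\perp}\Big(\int \varphi(y,s)\,\nu_y(ds)\Big)dy.$$
So $\mu_v$ disintegrates as $dy\otimes\nu_y$. The absolutely continuous part of $\mu_v$ has density $\innerprod{Q(x)v,v}$. The main point to verify is that the absolutely continuous part of $dy\otimes\nu_y$ is $dy\otimes q_y\,ds$, that is, that the singular parts match. One direction is clear: $dy\otimes\nu_y^{s}$ is carried by a set $N$ whose $y$-slices are Lebesgue-null, hence $N$ is $m$-null by Fubini, so this part is singular with respect to $m$. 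The absolutely continuous part $dy\otimes q_y\,ds$ is obviously absolutely continuous. By uniqueness of the Lebesgue decomposition, $q_y(s)=\innerprod{Q(y+sv)v,v}$ for a.e.\ $(y,s)$. Measurability of $(y,s)\mapsto q_y(s)$ follows by writing it as a limit of difference quotients of $\nu_y$-masses of intervals, which are measurable in $(y,s)$.

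\textbf{Step 3 (conclusion).} The set of bad $(y,s)$ from Steps 1 and 2 is measurable. Each of its $y$-slices is null, so by Fubini it has Lebesgue measure zero. For $x$ outside this set, substituting the identifications from Step 2 into the expansion from Step 1 gives exactly the claimed limit.

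The main obstacle is Step 2, specifically matching the singular parts under disintegration together with the associated measurability bookkeeping. A fallback route is available: use \cref{chain_rule} with an anisotropic $S$ that stretches along $v$, to isolate the $\innerprod{Qv,v}$ component probabilistically. However, that route only yields averaged (in expectation) second-order information, not the pointwise expansion along the line, so I would keep the Fubini approach as the primary argument.
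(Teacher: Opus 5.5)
Your argument is correct. It matches the paper on the existence half but uses a different route for the identification half.

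\emph{Existence (same as the paper).} The paper also applies the one-dimensional Alexandrov theorem on each line parallel to $v$; it simply cites it, while your Step 1 proves it from the Lebesgue decomposition of $\nu_y$. It notes that $\innerprod{p(x),v}$ is a subgradient of the restriction, hence equals its derivative wherever that exists. Fubini then gives a full-measure set $\mathcal{S}$ on which $\lim_{r\to0}\big(f(x+rv)-f(x)-r\innerprod{p(x),v}\big)/r^2=\frac12 q(x)$ exists.

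\emph{Identification (different).} To show $q(x)=\innerprod{Q(x)v,v}$, the paper uses essentially your fallback. With $g(y)=f(x+y)-f(x)-\innerprod{p(x),y}$, it turns the pointwise expansion into $\lim_{t\to0}\E[g(B_tv)]/t=\frac12 q(x)$ and writes $B_tv=SW_t$ with the rank-one matrix $S=RS_1$. Since \cref{chain_rule} needs an invertible matrix, it sandwiches $\E[g(SW_t)]$, using convexity of $g$, between quantities built from $S^\pm_\epsilon=R(S_1\pm\epsilon I)$ and $R$. Then \cref{chain_rule} and \cref{EAtdividet} give $\frac12\mathrm{tr}(S^TQ(x)S)=\frac12\innerprod{Q(x)v,v}$ as $\epsilon\to0$.

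So your objection to the fallback does not apply. The pointwise expansion already comes from the one-dimensional theorem, and an averaged identity is enough to pin down the coefficient.

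\emph{Comparison.} Your disintegration $\mu_v=dy\otimes\nu_y$ plus uniqueness of the Lebesgue decomposition is elementary and self-contained. It makes the lemma independent of \cref{rep_thm}, \cref{trace_lem}, \cref{sato_lem} and \cref{chain_rule}. What it gives up is the paper's aim of extracting this information from the Revuz/heat-kernel picture.

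\emph{Measurability.} The bookkeeping you flag closes cleanly with $N=\{(y,s):\limsup_{r\downarrow0}\nu_y([s-r,s+r])/(2r)=\infty\}$.
\begin{itemize}
\item $N$ is Borel, because $\nu_y([a,b])$ is a difference of one-sided directional derivatives of $f$ at $y+bv$ and $y+av$.
\item Its slices are Lebesgue-null, and it carries $\nu_y^s$ for every $y$.
\item The same $\limsup$ on $N^c$ gives a jointly measurable version of $q_y(s)$.
\end{itemize}
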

\begin{proof}
Note that we can remove the absolute sign of the expression inside the limit because the expression is a function of $r$, which is only a real number. Consider an unit vector $v$. WLOG, assume that $v = e_1 = (1, 0, \cdots, 0)$. Let $l_{x_0}$ be the line $\set{rv + x_0}_{r \in \mathbb{R}}$. If $p(x)$ is sub-gradient of $f$ at $x$, $p(x)v$ is sub-gradient of $f_1(r) = f(x_0 + rv)$ at $r = (x-x_0)\cdot v$. Therefore, for each $x_0=(0, y_0) \in \set{0} \times \mathbb{R}^{d-1}$, by 1D Alexandrov theorem on $f_1$, the set $E^{y_0}$ = $\{r \in \mathbb{R}$ so that $x = (r, y_0) \in l_{x_0}: f_1$  is not second $v$-directional differentiable at $x$ with first derivative $p(x)\}$ has 1D Lebesgue measure zero. Let $\mathcal{S}$ be the set of all $x \in \mathbb{R}^d$ so that there is some $q = q(x)$ on $\mathbb{R}^d$ such that:
$$\lim_{r \to 0} \frac{f(x+rv) - f(x) - rp(x)v}{r^2} = \frac{1}{2}q(x)$$

Because $f$ is continuous, we can consider the limit in LHS only over the rational number $r$. Each function $\frac{f(x+rv) - f(x) - rp(x)v}{r^2}$ of variable $x$ is measurable, and so is the set of $x$ on which the limit of the sequence of these measurable functions exist. Thus, $\mathcal{S}$ is Lebesgue measurable. Because $\mathcal{S}^c \subset \bigcup_{y \in \mathbb{R}^{d-1}}(E^y \times \set{y})$, $m(\mathcal{S}^c) \leq \int_{\mathbb{R}^{d-1}} m(E^y)dy = 0$. The integral is well-defined because of the Fubini's theorem. Hence, $m(\mathcal{S}^c) = 0$. In the general case, when $v \neq e_1$, we integrate over the $d-1$-dimensional subspace with normal being vector $v$ instead of $\mathbb{R}^{d-1}$.

Let $B_t$ be $1D$ Brownian motion (we use a different notation $B$ instead of $W$ to emphasize the dimension). Then for $x \in \mathcal{S}$, by a simple estimation, we can show that
$$\lim_{t \to 0}\frac{\E[f(x+ B_tv) - f(x) - B_t p(x)v]}{t} = \frac{1}{2}q(x)$$

Let $R$ to be the rotation matrix that rotates the basis vector $e_1 = (1, 0, \cdots, 0) \in \mathbb{R}^d$ to $v$, and $S_1$ to be the matrix with a single non-zero entry $(S_1)_{11} = 1$. Finally, let $S = RS_1$, and $S^+_{\epsilon} = R(S_1 + \epsilon I_n) = RS_1 + \epsilon R$, and $S^-_{\epsilon} = R(S_1 - \epsilon I_n) = RS_1 - \epsilon R$. Fix $x \in \mathcal{S}$, and define $g(y) = f(x+y)-f(x)-p(x)y$. Let $\alpha = 1/(1+\epsilon)^2$. Now, by the convexity of $g$, we have:
$$\E[g(S^+_{\epsilon}W_{\alpha t})] = \E\left[g\left(S^+_{\epsilon}\frac{W_t}{1+\epsilon}\right)\right] \leq \frac{1}{1+\epsilon}\E[g(RS_1W_t)] + \frac{\epsilon}{1+\epsilon} \E[g(RW_t)]$$

Thus, $$\E[g(SW_t)] = \E[g(RS_1W_t)] \geq (1+ \epsilon) \E[g(S^+_{\epsilon}W_{\alpha t})] - 
\epsilon\E[g(RW_t)]$$

Therefore, by \cref{chain_rule}, we have:
\begin{align*}
&\lim_{t \to 0}\frac{\E[g(SW_t)] }{t} \geq (1+ \epsilon) \lim_{t \to 0}\frac{\E[g(S^{+}_{\epsilon}W_{\alpha t})]}{t} - 
 \epsilon\lim_{t \to 0}\frac{\E[g(RW_t)]}{t} \\
 &= \frac{(1+ \epsilon)\alpha}{2} tr((S^+_{\epsilon})^TQ(x) S^+_{\epsilon})- \epsilon\lim_{t \to 0}\frac{\E[g(W_t)]}{t}\\
 \end{align*}
 The second last equality follows from the fact that Brownian motion is invariant under rotation. By letting $\epsilon \to 0$, we obtain the lower bound inequality:
 $$\lim_{t \to 0}\frac{\E[g(SW_t)] }{t} \geq  \frac{1}{2}tr(S^TQ(x)S)$$ Similarly, if we consider $S^-_{\epsilon}$ instead, we would get the upper bound inequality. Thus, by noting that the first coordinate of the Brownian motion is itself a Brownian motion, we get $$\lim_{t \to 0}\frac{\E[g(B_tv)] }{t} = \lim_{t \to 0}\frac{\E[g(SW_t)] }{t} =  \frac{1}{2}tr(S^TQ(x)S) =  \frac{1}{2}\innerprod{Q(x)v, v}$$

 Therefore, the second directional derivative of $f$ at $x \in \mathcal{S}$ in the direction $v$ is $(p(x), q(x)) = (p(x), \innerprod{Q(x)v, v})$ as desired. Because $m(\mathcal{S}^c) = 0$, we can finish the proof here.
\end{proof}
\vspace{2mm}

\begin{thm}(Ito-like lemma for convex function)\label{ito_lemma}
For a convex function $f$, for a.e $x$, we have: 
$$\lim_{t \to 0}\frac{1}{t}\E[|f(W_t^x) - f(x) - \innerprod{p(x), W_t} - \innerprod{Q(x)W_t, W_t}|]  = 0$$
\end{thm}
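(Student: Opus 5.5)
Throughout I read the quadratic term as $\frac12\innerprod{Q(x)W_t,W_t}$. This matches \cref{EAtdividet} and \cref{directional_derivative_exist}; without the factor $\frac12$ the limit would equal $\frac12 tr(Q(x))$ rather than $0$.

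Fix $x$ and set $g(y)=f(x+y)-f(x)-\innerprod{p(x),y}$. Then $g$ is convex, $g\ge 0$ and $g(0)=0$. Write $W_t=\sqrt t Z$ with $Z$ standard Gaussian, and define the rescaled convex functions $g_t(z)=g(\sqrt t z)/t\ge 0$ and $q(z)=\frac12\innerprod{Q(x)z,z}\ge 0$. The claim then becomes $\E[|g_t(Z)-q(Z)|]\to 0$ as $t\to0$. I would prove this with Scheffé's lemma, which needs two ingredients: convergence of the means, and almost sure (in fact pointwise) convergence.

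\emph{Means.} \cref{EAtdividet} gives $\E[g_t(Z)]=\frac1t\E[f(W^x_t)-f(x)-\innerprod{p(x),W_t}]\to\frac12 tr(Q(x))$ for a.e. $x$. The right-hand side equals $\E[q(Z)]$.

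\emph{Pointwise convergence on a dense set.} Take a countable dense set $\{v_k\}$ of unit vectors and apply \cref{directional_derivative_exist} to each $v_k$. Discarding countably many null sets, together with the exceptional set of \cref{EAtdividet}, we get for a.e. $x$ and all $k$ simultaneously
\[
g(rv_k)/r^2\to\tfrac12\innerprod{Q(x)v_k,v_k}\quad\text{as } r\to 0 .
\]
Taking $r=\sqrt t\,\lambda$, this says $g_t(\lambda v_k)\to\lambda^2 q(v_k)=q(\lambda v_k)$ for every $\lambda\in\mathbb{R}$. Hence $g_t\to q$ pointwise on the dense set $D=\{\lambda v_k\}$.

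\emph{Upgrade to all of $\mathbb{R}^d$.} This is the step I expect to be the main obstacle. I would use the standard fact that a family of convex functions which converges pointwise on a dense set to a continuous limit, and is locally uniformly bounded, converges locally uniformly. For local boundedness above on a ball $B_\rho$: choose finitely many points of $D$ whose convex hull contains $B_{2\rho}$. By convexity, $g_t$ on $B_{2\rho}$ is bounded by the maximum of its values at those points, and these values converge, hence are bounded for small $t$. Since $g_t\ge 0$, we also have a lower bound. Convexity then gives equi-Lipschitz bounds on $B_\rho$, and with an $\varepsilon$-net drawn from $D$ this yields uniform convergence on $B_\rho$. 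Consequently $g_t(Z)\to q(Z)$ everywhere on $\Omega$.

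\emph{Conclusion.} Scheffé's lemma applies: the functions are nonnegative, converge almost surely, and their integrals converge to the (finite) integral of the limit. It gives $\E[|g_t(Z)-q(Z)|]\to0$, which is the statement for a.e. $x$.

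A subtle point to check is that the same choice of subgradient $p(x)$ and the same density $Q(x)$ appear in both lemmas, so that the limits are consistent. This holds as both are stated with the paper's fixed $p$ and $Q$.
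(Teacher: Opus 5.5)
Your proof is correct but takes a different route from the paper. Your reading of the quadratic term as $\frac12\innerprod{Q(x)W_t,W_t}$ is right; the paper's own proof defines $X_t$ with the $\frac12$. You use the same two inputs as the paper: \cref{EAtdividet} for the means, and \cref{directional_derivative_exist} along countably many directions. Scheffé's lemma is also the same device as the paper's identity $|X_t|=2X_t^+-X_t$, only applied to the opposite sign.

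\textbf{How the paper does it.} The paper bounds $X_t^+$, which is $(g_t-q)^+$ in your notation, from above. It covers $\mathbb{R}^d$ by narrow cones with rational edge directions and writes $W_t$ as a convex combination of points on the edge rays. It then applies convexity of $g_1$ and the directional expansions on those finitely many rays, handles $\{\norm{W_t}\ge\delta_0\}$ by growth and Gaussian-tail estimates, and lets the mesh shrink.

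\textbf{How you do it.} You get $(q-g_t)^+\le q$ for free, so by dominated convergence you only need everywhere pointwise convergence of $g_t$ to $q$. You obtain that from the standard fact that convex functions converging on a dense set converge locally uniformly.

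\textbf{What your route buys.} It is shorter. It avoids the cone construction, whose claim that every $y\in C_i$ is a convex combination of the corner points needs care: a latitude--longitude cell on the sphere is not the cone spanned by its corner rays, so simplicial cones would be needed. It also avoids the reduction to $Q\succeq I$ and any tail estimates beyond those already inside \cref{EAtdividet}.

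It also proves strictly more. Your intermediate statement $\sup_{\norm{z}\le1}|g_t(z)-q(z)|\to0$ is exactly $f(x+y)=f(x)+\innerprod{p(x),y}+\frac12\innerprod{Q(x)y,y}+o(\norm{y}^2)$. So \cref{alexandrov} already follows from \cref{directional_derivative_exist} plus convexity, and \cref{bound_sup_by_expectation} and \cref{diff_from_expectation} become unnecessary. The Ito-like lemma then becomes a corollary of second-order differentiability rather than a stepping stone to it.

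\textbf{What the paper's route buys.} Its one-sided argument yields only the averaged ($L^1$) statement. That fits the structure it is built around: expectation first, with the pointwise conclusion recovered afterwards in the final section.
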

\begin{proof}
The unit sphere $S^{d-1}$ can be parameterized by $d-1$ parameters so that the domain of the parameterization is the $d-1$-dimensional hypercube. We consider a grid on this hypercube domain. For an $\epsilon > 0$, the grid size can be chosen small enough so that the surface area of a grid's image on the unit sphere $S^{d-1}$ is less than $\epsilon$. We map back grid points on the grid to points on the sphere $S^{d-1}$, and consider the rays starting from the origin and passing through those grid points. These rays divide the space $\mathbb{R}^d$ into narrow cones $C_1, \cdots, C_N$ ($N = N_{\epsilon}$ depends on $\epsilon$ only). We consider a sequence $\set{\epsilon_n} \to 0$, and the sequence of grids associated with $\epsilon = \epsilon_n$ in this sequence. We call the unit directions of rays of these grids \textit{rational} directions. There are only countably many of \textit{rational} directions. Applying \cref{directional_derivative_exist} to all rational directions, we get that for almost every $x$ the following statement is true: for any rational direction $v$, we have:
\begin{equation}\label{directional1}
\lim_{r \to 0}\frac{1}{r^2}|f(x + rv)-f(x)-p(x).rv - \frac{1}{2}\innerprod{Q(x)v, v}r^2| = 0
\end{equation}

Fix an $x$ that satisfies this directional derivative property. Fix an $\epsilon$ (in the sequence $\set{\epsilon_n}$). We can see that $Q(x)$ is definite non-negative, and, for notation convenience, let $Q(x) = Q$, and $p(x) = p$. WLOG, we can assume that $Q \neq 0$. If $Q = 0$, the absolute value is eliminated, and we can invoke \cref{EAtdividet} to finish the proof.

For a real-valued process $X_t$, $|X_t| = 2X_t^+ - X_t$. Thus, if $\lim\limits_{t \to 0} \dfrac{\E[X_t]}{t}$ exists, we must have:
$$\limsup_{t \to 0} \frac{\E[|X_t|]}{t} = 2\limsup_{t \to 0}\frac{\E[X_t^+]}{t} + \lim_{t \to 0}\frac{\E[X_t]}{t}$$

Let $X_t = f(W_t^x) - f(x) - \innerprod{p,W_t} - \frac{1}{2}\innerprod{QW_t, W_t}$. By \cref{EAtdividet}, we have:
\begin{align*}
\lim_{t \to 0}\frac{\E[X_t]}{t} &=\lim_{t \to 0}\E\left[\frac{f(W_t^x) - f(x) - \innerprod{p,W_t}}{t}\right] - \lim_{t \to 0}\frac{1}{t}\E\left[\frac{1}{2}\innerprod{QW_t, W_t}\right]\\
&= \frac{1}{2}tr(Q(x)t) -  \frac{1}{2}tr(Q(x)t) = 0
\end{align*}

Therefore, by letting $g(y) = f(y+x)-f(x)-\innerprod{p, y}- \frac{1}{2}\innerprod{Qy, y}$, we have:
\begin{align}\label{main_inequality}
\begin{split}
\limsup_{t \to 0} &\frac{\E[|f(W_t^x) - f(x) - \innerprod{p,W_t} - \frac{1}{2}\innerprod{QW_t, W_t}|]}{t}  = \limsup_{t \to 0}\frac{\E[|X_t|]}{t}\\ 
&= 2 \limsup_{t \to 0}\frac{\E[X_t^+]}{t} = 2\limsup_{t \to 0}  \frac{\E[g(W_t)^+]}{t}
\end{split}
\end{align}

If we replace $f$ by $f + \frac{1}{2}\norm{x}^2$, $f$ is still convex and at most polynomial growth, and the quotient inside the limit still the same. But $p(x)$ is replaced by $p(x)+x$, and $Q(x)$ is replaced by $Q(x) + I_n$. In this case, the new second derivative density $Q(x) \succeq I_n$ so that $\innerprod{Q(x)y, y} \geq \norm{y}^2$ for each $y$. Thus, WLOG, we can assume that for each $x$, $\innerprod{Q(x)y, y} \geq \norm{y}^2\ \forall y$.
\vspace{2mm}

Consider the $\epsilon$-grid described above. We now bound $g(W_t)$ in term of $\epsilon$. We define $2^{d-1} \times N$ maps $T^1_j, \cdots, T^N_j: \mathbb{R}^d \to \mathbb{R}^d$ for $j \in \overline{1, 2^{d-1}}$ as follows: for each $i \in \overline{1, N}$, if $y \in C_i$, for each $y$, consider the tangent hyperplane passing through $y$ to the sphere of radius $\norm{y}$. Each region $C_i$ is defines by $2^{d-1}$ rays from the origin. Let $y_1, \cdots, y_{2^{d-1}}$ be the intersections of these $2^{d-1}$ rays with the above tangent hyperplane, and define $T^i_j(y) = y_j$ for $j \in {1, 2^{d-1}}$. Otherwise, if $y \not\in C_i$, define $T^i_j(y) = 0$ for all $j \in \overline{1, 2^{d-1}}$.

As we let $\epsilon = \epsilon_n \to 0$, all the intersection regions of the cones $C_i$ with unit sphere $S^{d-1}$ shrink to one points, and so are $C_i$'s $2^{d-1}$ intersections with the tangent hyperplanes. The ratio $T^i_j(y)/ y$ doesn't depends on the norm of $y$. Moreover, as $\epsilon \to 0$, $T^i_j(y) \to y$ uniformly in $i, j$ and in $y \in C_i \cap S^{d-1}$. We also have the uniform convergence: $\innerprod{QT^i_j(y), T^i_j(y)} \to \innerprod{Qy, y}$ (in $i, j$, and $y \in C_i \cap S^{d-1}$). Therefore, there exists $a(\epsilon)$ with $a(\epsilon) \to 0$ as $\epsilon \to 0$ so that for $y \in C_i \neq 0$, we have:
\begin{equation}\label{cond1} 
\norm{T^i_j(y)} \leq (1 + a(\epsilon))\norm{y} \text{, and }
\end{equation}
\begin{equation}\label{cond2}
\left|\innerprod{QT^i_j(y), T^i_j(y)}-\innerprod{Qy, y} \right| \leq a(\epsilon)\norm{y}^2 \leq a(\epsilon)\innerprod{Qy, y}\ \forall j \in \overline{1, 2^{d-1}}
\end{equation}

Furthermore, if $y \in C_i$, then $y$ is a convex combination of the points $T^i_j(y)$ so that $y = \sum_{j = 1}^{2^{d-1}} \alpha^{i, y}_j T^i_j(y)$ for $0 \leq \alpha^{i, y}_j \leq 1$, and $\sum_{j=1}^{2^{d-1}} \alpha^{i, y}_j = 1$. To simplify notation, we drop the superscript $W_t$ in $\alpha^{i, W_t}_j = \alpha^i_j$. Now let $g_1(y) = f(y+x)-f(x)-py$. Then $g_1$ is a convex function and $g(y) = g_1(y) - \frac{1}{2}\innerprod{Qy, y}$. By using the convexity of $g_1$ (the first inequality), we have:
\begin{align*}
&g(W_t)= \sum_{i = 1}^N g(W_t)1_{\set{W_t \in C_i}} =
\sum_{i = 1}^N (g_1(W_t)1_{\set{W_t \in C_i}} - \frac{1}{2}\innerprod{QW_t, W_t}1_{\set{W_t \in C_i}})\\
&\leq  \sum_{i =1}^N \bigg(\sum_{j =1}^{2^{d-1}} \alpha^i_j g_1(T^i_j(W_t))1_{\set{W_t \in C_i}}- \frac{1}{2}\innerprod{QW_t, W_t}1_{\set{W_t \in C_i}}\bigg)\\
&\leq  \sum_{i =1}^N \bigg(\sum_{j =1}^{2^{d-1}} \alpha^i_j g(T^i_j(W_t))1_{\set{W_t \in C_i}}+ \frac{1}{2}\sum_{j =1}^{2^{d-1}}\alpha^i_j\left| \innerprod{QW_t, W_t} -\innerprod{QT^i_j(W_t), T^i_j(W_t)} \right|1_{\set{W_t \in C_i}} \bigg)
\end{align*}

By using the constraint \cref{cond2}, we get:
\begin{align*}
&g(W_t) \leq  \sum_{i =1}^N \left(\sum_{j =1}^{2^{d-1}} \alpha^i_j g(T^i_j(W_t))1_{\set{W_t \in C_i}} + \frac{1}{2}\sum_{j =1}^{2^{d-1}}\alpha^i_j a(\epsilon)\innerprod{QW_t, W_t}1_{\set{W_t \in C_i}} \right)\\
&= \sum_{i =1}^N \sum_{j =1}^{2^{d-1}} \alpha^i_j g(T^i_j(W_t))1_{\set{W_t \in C_i}}  + \frac{1}{2}a(\epsilon)\innerprod{QW_t, W_t} \left( \sum_{i =1}^n 1_{\set{W_t \in C_i}} \right)\\
&\leq \sum_{i =1}^N \sum_{j =1}^{2^{d-1}} \alpha^i_j g(T^i_j(W_t))1_{\set{W_t \in C_i}}  + \frac{1}{2}a(\epsilon)\norm{Q} \norm{W_t}^2
\end{align*}

We prove that 
\begin{equation}\label{t_eqn}
\lim_{t \to 0} \dfrac{\E[|g(T_i^j(W_t))|]}{t} = 0
\end{equation}\\

By the first constraint, $\norm{T_i^j(W_t)} \leq (1 + a(\epsilon))\norm{W_t}$. Moreover, because the direction of $T_i^j(W_t)$ is rational, for each $\epsilon_0 > 0$, by \cref{directional1}, there exists $\delta > 0$ so that $g(T_i^j(y)) < \epsilon_0 \norm{T_i^j(y)}^2$ for all $\norm{T_i^j(y)} < \delta$. Thus, for $\norm{W_t}  < \delta/(1 + a(\epsilon)) = \delta_0$, we surely have $g(T_i^j(W_t)) < \epsilon_0 \norm{T_i^j(W_t)}^2 \leq \epsilon_0 (1 + a(\epsilon))^2\norm{W_t}^2$. Also note that because $f$ is at most linear growth, there exists constant $A$ and $B$ that only depends on $x$ so that $g(y) \leq A\norm{y} + B$ for every $y$. Thus,
\begin{align*}
\frac{\E[|g(T_i^j(W_t))|]}{t}& \leq \frac{\E[\epsilon_0 (1 + a(\epsilon))^2\norm{W_t}^2]}{t} +  \frac{\E[|g(T_i^j(W_t))|1_{\set{W_t \geq \delta_0}}]}{t}\\
& \leq  \epsilon_0 (1 + a(\epsilon))^2 + \frac{\E[((1+ a(\epsilon))A\norm{W_t}+B)1_{\set{W_t \geq \delta_0}}]}{t}
\end{align*}

Then $$\limsup_{t \to 0}\frac{\E[|g(T_i^j(W_t))|]}{t}  \leq \epsilon_0 (1 + a(\epsilon))^2 + 0$$

By letting $\epsilon_0 \to 0$, we obtain the identity \cref{t_eqn}.\\

By \cref{t_eqn} and by the bound we derived for $g(W_t)$, we get:

\begin{align*}
\limsup_{t \to 0} \frac{\E[g(W_t)^+]}{t} &\leq \sum_{i =1}^N \sum_{j=1}^{2^{d-1}} \limsup_{t \to 0}\frac{\E[|g(T_i^j(W_t))|]}{t} + \frac{1}{2}a(\epsilon)\norm{Q}\\
&= \sum_{i =1}^N \sum_{j=1}^{2^{d-1}} 0 + \frac{1}{2}a(\epsilon)\norm{Q} = \frac{1}{2}a(\epsilon)\norm{Q}
\end{align*}

By letting $\epsilon \to 0$, $a(\epsilon) \to 0$, we obtain:
$$\limsup_{t \to 0} \frac{\E[g(W_t)^+]}{t}  = 0$$

Thus, by \cref{main_inequality}, we can finish the proof here.
\end{proof}
\vspace{2mm}
\section{From expectation to almost everywhere}
Now we use the Ito's lemma for convex function in \cref{ito_lemma} to show an important analytic property: a convex function is second differentiable almost everywhere. Even though Ito's lemma only gives a calculation in the form of expectation/averaging, the strong form of Ito's lemma can help us transform the expectation setting into a point-wise setting. Before going to the main \cref{alexandrov}, we first begin with two crucial lemmas, \cref{bound_sup_by_expectation} and \cref{diff_from_expectation}, that allow this transformation between two types of settings.

\begin{lem}\label{bound_sup_by_expectation}
There exists a universal constants $C > 0$ and $\alpha \in (0, 1)$ such that for nonnegative locally Lipschitz function $g$ with $g(0) = 0$, and $t = r^2 > 0$, we have
$$\sup_{B(r)} g \leq C(r\Lip_{B(r)}g)^{\alpha}\E[g(W_t)]^{1-\alpha}$$
\end{lem}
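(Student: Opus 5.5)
The plan is to pick the point where $g$ is largest on $B(r)$, use the Lipschitz bound to show $g$ stays large on a small ball around it, and bound $\E[g(W_t)]$ from below by the Gaussian mass of that ball. Set $M:=\sup_{B(r)} g$ and $L:=\Lip_{B(r)} g$. If $M=0$ there is nothing to prove, so assume $M>0$; then $L>0$ as well, since $g(0)=0$.

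First I would get a lower bound for $L$. Because $g(0)=0$ and $g$ is $L$-Lipschitz on $B(r)$, every $y\in B(r)$ satisfies $g(y)\le L\norm{y}\le Lr$, so $M\le rL$. By continuity of $g$ we can pick $y_0\in\overline{B(r)}$ with $g(y_0)=M$.

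Next I would find a ball on which $g\ge M/2$. Set $\rho:=M/(2L)$, which is at most $r/2$. For $y\in B(y_0,\rho)\cap B(r)$ the Lipschitz bound gives $g(y)\ge M-L\norm{y-y_0}\ge M/2$. If $y_0$ is near the boundary, part of $B(y_0,\rho)$ falls outside $B(r)$, so I replace it by a ball inside $B(r)$. Let $y_1$ be $y_0$ moved toward the origin by $\rho/2$ (or $y_1=y_0$ if $\norm{y_0}\le\rho/2$). Then $B(y_1,\rho/2)\subset B(y_0,\rho)\cap B(r)$, and $g\ge M/2$ on this smaller ball. Since $g\ge 0$ everywhere,
\[
\E[g(W_t)]\ \ge\ \frac{M}{2}\,\Prb\big(W_t\in B(y_1,\rho/2)\big).
\]

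Then I would bound the Gaussian probability. With $t=r^2$, every point of $B(y_1,\rho/2)$ lies in $B(r)$, so the Gaussian density $(2\pi r^2)^{-d/2}e^{-\norm{y}^2/2r^2}$ is at least $(2\pi r^2)^{-d/2}e^{-1/2}$ there. Multiplying by the volume $\omega_d(\rho/2)^d$ gives
\[
\Prb\big(W_t\in B(y_1,\rho/2)\big)\ \ge\ c_d\,(\rho/r)^d\ =\ c_d\Big(\frac{M}{2rL}\Big)^d .
\]
This step is where the choice $t=r^2$ matters, because it makes the estimate scale-free.

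Combining the last two displays gives $\E[g(W_t)]\ge c_d'\,M^{d+1}(rL)^{-d}$, that is,
\[
M\ \le\ C\,(rL)^{d/(d+1)}\,\E[g(W_t)]^{1/(d+1)} .
\]
This is the claimed inequality with $\alpha=d/(d+1)\in(0,1)$ and $C=(c_d')^{-1/(d+1)}$, where $c_d'$ is $c_d$ times the constant factors $1/2$ and $2^{-d}$ collected along the way. Both constants depend only on the dimension $d$, which is the sense in which they are universal. The only delicate point is the boundary case, where $B(y_0,\rho)$ sticks out of $B(r)$ and the Lipschitz control no longer holds. Shrinking the ball to $B(y_1,\rho/2)$ handles this, and it costs only the dimensional factor already included in $c_d'$.
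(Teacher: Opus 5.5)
Your proof is correct and follows essentially the same route as the paper. You take the maximizer $y_0$, use $g(0)=0$ to get $M\le rL$, use the Lipschitz bound to keep $g$ large on a ball of radius comparable to $M/L$ inside $B(r)$, and compare Lebesgue and Gaussian measure on $B(r)$ to obtain $\alpha=d/(d+1)$. The differences are cosmetic. You use the sub-ball where $g\ge M/2$ instead of the volume under the Lipschitz cone, and you work at scale $r$ directly rather than rescaling to $r=1$. Your inward shift of the center to $y_1$ also makes explicit the paper's brief claim that $K\cap B(1)$ retains a fixed fraction of the volume of $K$.
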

\begin{proof}
We assume that $r=1$. Let $x_0$ be the point in $\bar{B}(1)$ where $\sup g$ is attained and let $L = Lip_{B(1)}g$. Since $g(0) = 0$, we note that $L \geq g(x_0)$. The Lipschitz assumption implies that $g$ lies above the (inverted) conical region $F$ with vertex $(x_0, g(x_0))$ and a base $B(1) \cap K$, where $K = \set{x \in \mathbb{R}^n: |x-x_0| \leq g(x_0)/L} = \bar{B}(x_0, g(x_0)/L)$. Since the center of $K$ is inside $B(1)$ and its radius is at most $1$, the volume of $K \cap B(1)$ is bounded by a fraction of volume of $K$, given by $C(g(x_0)/L)^n$. The $n$-dimensional volume of $K$ is further bounded by $C(g(x_0)/L)^n g(x_0)$, and so
\begin{equation}
\int_{B(1)}g(x)dx \geq C(g(x_0)/L)^n g(x_0) \text{, i.e, } g(x_0) \leq C\left(\int_{B(1)} g(x) dx \right)^{1/(n+1)} L^{\frac{n}{n+1}}
\end{equation}
Since the Lebesgue and the distribution of $W_1$ (the unit Gaussian measure) are equivalent on $B(1)$ and $g$ is non-negative, we have $\int_{B(1)} g(x)dx \leq C \E[g(W_1)]$, for some universal $C$. Lastly, to remove the assumption $r=1$, we apply the inequality to a scaled version $x \to g(xr)$ of $g$.
\end{proof}
\vspace{2mm}

\begin{lem}\label{diff_from_expectation} (Uniform from $L^1$-convergence)
Given a convex function $f$ on $\mathbb{R}^d$ with at most polynomial growth, and consider a point $x$ so that an Ito-like condition is satisfied:
$$\lim_{t \to 0}\frac{1}{t}\E[|f(W_t^x) - f(x) - \innerprod{p(x), W_t} - \innerprod{Q(x)W_t, W_t}|]  = 0$$
Then $f$ is second differentiable at $x$ with the derivatives $(p(x), Q(x))$
\end{lem}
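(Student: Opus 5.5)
The plan is to apply \cref{bound_sup_by_expectation} to the nonnegative error function
$$g(y) := \Big| f(x+y) - f(x) - \innerprod{p(x), y} - \innerprod{Q(x)y, y}\Big|,$$
where the quadratic term is normalized exactly as in the hypothesis. Write $Q = Q(x)$ and $p = p(x)$. We have $g \geq 0$ and $g(0) = 0$, and $g$ is locally Lipschitz because convex functions are. The hypothesis says precisely that $\eta(t) := \E[g(W_t)]/t \to 0$. Set
$$M(r) := \sup_{B(r)} g, \qquad \varphi(r) := M(r)/r^2.$$
The goal is $\varphi(r) \to 0$ as $r \to 0$. This is exactly the statement that $f(x+y) = f(x) + \innerprod{p, y} + \innerprod{Qy, y} + o(\norm{y}^2)$, i.e.\ second differentiability at $x$ with derivatives $(p, Q)$.

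\textbf{Why a crude bound fails.} Applying \cref{bound_sup_by_expectation} with $t = r^2$ and only the trivial local bound $\Lip_{B(r)} g \leq C$ gives
$$M(r) \leq C r^{\alpha}\, o(r^2)^{1-\alpha} = o(r^{2-\alpha}).$$
This falls short of $o(r^2)$. The Lipschitz constant of $g$ on $B(r)$ must therefore be shown to be $O(r)$, and I expect this to be the main obstacle.

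\textbf{Step 1: control the Lipschitz constant by convexity.} Let $h(y) = f(x+y) - f(x) - \innerprod{p, y}$. This is convex, nonnegative (since $p$ is a subgradient), and satisfies $h(0) = 0$. For a convex function, the gradient on $B(r)$ is bounded by the oscillation on $B(2r)$ divided by $r$. Since $h \geq 0$, and writing $h = g$-perturbation of the quadratic, this gives
$$\Lip_{B(r)} h \leq \frac{\sup_{B(2r)} h}{r} \leq \frac{\norm{Q}(2r)^2 + M(2r)}{r}.$$
Consequently
$$r\,\Lip_{B(r)} g \leq r\,\Lip_{B(r)} h + C\norm{Q} r^2 \leq C r^2\big(1 + \varphi(2r)\big).$$
Plugging this into \cref{bound_sup_by_expectation} with $t = r^2$ yields the recursive inequality
$$\varphi(r) \leq C\big(1 + \varphi(2r)\big)^{\alpha}\, \eta(r^2)^{1-\alpha}.$$

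\textbf{Step 2: show $\varphi$ is bounded near $0$.} The recursion alone does not give this, so I would again use convexity. For a nonnegative convex function, the supremum over $B(r)$ is bounded by a constant times the average over $B(2r)$; this follows from the mean-value inequality for convex functions, or from the cone argument in the proof of \cref{bound_sup_by_expectation}. By equivalence of Lebesgue measure and the law of $W_{4r^2}$ on $B(2r)$ (after scaling),
$$\frac{1}{r^d}\int_{B(2r)} h \leq C\,\E[h(W_{4r^2})] = O(r^2).$$
The last bound holds by the hypothesis, or directly by \cref{EAtdividet}. Hence $\sup_{B(r)} h = O(r^2)$. Since $g \leq h + \norm{Q}\,\norm{y}^2$, it follows that $\varphi$ is bounded on $(0, r_0]$.

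\textbf{Step 3: conclude.} With $\varphi(2r) \leq K$ for small $r$, the recursion gives
$$\varphi(r) \leq C(1+K)^{\alpha}\, \eta(r^2)^{1-\alpha} \to 0.$$
This is exactly the second-order Taylor expansion of $f$ at $x$ with derivatives $(p(x), Q(x))$.

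The delicate points are the convexity bound on the Lipschitz constant in Step 1 and checking that the constants in \cref{bound_sup_by_expectation} survive the rescaling $y \mapsto ry$ uniformly in $r$. I would verify both carefully.
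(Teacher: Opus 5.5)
Your argument is correct. Its core matches the paper's proof: apply \cref{bound_sup_by_expectation} at $t=r^2$, bound $\Lip_{B(r)}g$ by $\frac{C}{r}(\sup_{B(2r)}g+r^2)$ via the convex gradient estimate, and obtain the recursion $\varphi(r)\le C(1+\varphi(2r))^{\alpha}\eta(r^2)^{1-\alpha}$.

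The difference is Step 2, and there your remark that the recursion alone cannot give boundedness is wrong. The paper gets boundedness exactly from the recursion. Choose $r_0$ with $C\eta(r^2)^{1-\alpha}\le\frac12$ for $r\le r_0$, so that $\varphi(r)\le\frac12(1+\varphi(2r))$ there. On $(r_0,2r_0]$ one trivially has $\varphi\le K_0:=\sup_{B(2r_0)}g/r_0^2<\infty$. Induction over the dyadic shells $(2^{-k-1}r_0,2^{-k}r_0]$ then gives $\varphi\le\max(K_0,1)$ on $(0,r_0]$.

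Your replacement argument is also valid. It combines:
\begin{itemize}
\item the sub-mean-value inequality $h(y)\le|B(y,r)|^{-1}\int_{B(y,r)}h$ for convex $h$;
\item nonnegativity of $h$;
\item the lower bound on the density of $W_{4r^2}$ on $B(2r)$;
\item the estimate $\E[h(W_t)]\le\E[|h(W_t)-\innerprod{QW_t,W_t}|]+d\norm{Q}t=O(t)$.
\end{itemize}
It is in fact stronger than needed. It gives $\sup_{B(2r)}h=O(r^2)$ directly, hence $r\Lip_{B(r)}g=O(r^2)$. A single application of \cref{bound_sup_by_expectation} then yields $M(r)\le C\,O(r^2)^{\alpha}\,o(r^2)^{1-\alpha}=o(r^2)$, so the recursion is no longer needed. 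The paper's bootstrap, by contrast, needs no a priori estimate beyond finiteness of $\varphi$ at one scale.

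One correction: drop the parenthetical ``or directly by \cref{EAtdividet}''. That lemma holds only for a.e.\ $x$, while the present statement concerns an arbitrary point satisfying the Ito-like condition, so only the route through the hypothesis is available there.
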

\begin{proof}
Without the loss of generality, we assume that $f(0) = 0$ and $p = 0$, and for $x \in \mathbb{R}^n$, $r \geq 0$, and $t \geq 0$ we define:
\begin{equation}
g(x) = |f(x) - \frac{1}{2}\innerprod{Qx, x}|, s(r) = \sup_{x \in B(r)} g(x) \text{ and } G(r) = \E[g(W_{\sqrt{r}})]
\end{equation}
so that $G(r) = o(r^2)$, as $r \to 0$. The standard estimate $\Lip_{B(r)}f \leq \frac{C}{r} \sup_{B(2r)} f$ valid for convex functions to obtain:
\begin{equation}
\Lip_{B(r)}g \leq \Lip_{B(r)}f + \Lip_{B(r)}\frac{1}{2}\innerprod{Q.,.} \leq \frac{C}{r}\sup_{B(2r)}|f| + \frac{1}{2}Cr \leq \frac{C}{r}(s(2r)+r^2)
\end{equation}
where $C$ depends only on $n$ and $Q$. \cref{bound_sup_by_expectation} implies that:
\begin{equation}\label{bound1}
s(r) \leq C(s(2r) + r^2)^{\alpha}G(r)^{1-\alpha}
\end{equation}

For $r > 0$ is so small that $C(r^{-2}G(r))^{1-\alpha} \leq \frac{1}{8}$, we have $s(r)/r^2 \leq \frac{1}{2}(s(2r)/(2r)^2+1)^\alpha$, which in turn implies that $r^{-2}s(r)$ stays bounded as $r \to 0$. We just need divide by $r^2$ and take the $\limsup_{r \to 0}$ to complete the proof.
\end{proof}
\vspace{2mm}

\begin{thm}\label{alexandrov} (Busemann-Feller-Alexandrov)
A convex function $f$ on $U \subset \mathbb{R}^d$ is second differentiable almost everywhere.
\end{thm}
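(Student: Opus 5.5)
The plan is to combine \cref{ito_lemma} with \cref{diff_from_expectation}, after reducing from a convex function on an open set $U$ to a convex function on all of $\mathbb{R}^d$ with at most linear growth. Second differentiability is a local, pointwise property. So it suffices to show that for every closed ball $\bar B$ with $\bar B' \subset U$, where $B'$ is a concentric ball of twice the radius, $f$ is second differentiable at almost every point of $B$. Since $U$ is a countable union of such balls, this gives the theorem.

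\textbf{Step 1 (extension).} Fix such a pair of balls. Because $f$ is convex on a neighbourhood of $\bar B'$, it is Lipschitz there and admits subgradients $p(z)$ with $|p(z)|\le L$ for $z\in \bar B'$. Define
\[
\tilde f(y)=\sup_{z\in \bar B'\cap \mathbb{Q}^d}\bigl(f(z)+\innerprod{p(z),y-z}\bigr),
\]
a supremum of affine functions with uniformly bounded slopes. Then $\tilde f$ is convex on $\mathbb{R}^d$, has linear growth, and agrees with $f$ on the interior of $B'$; for the agreement, continuity of $f$ and density of rational points let one replace $\bar B'\cap\mathbb{Q}^d$ by $\bar B'$, and the supporting-hyperplane property then gives equality. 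Since $\tilde f$ coincides with $f$ on the open set $B'\supset B$, the second derivative measures $\mu$ of $f$ and $\tilde f$, their densities $Q$, and the subgradient choices all agree on $B$.

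\textbf{Step 2 (apply the earlier results).} Apply \cref{ito_lemma} to $\tilde f$. For a.e.\ $x$, the Ito-like $L^1$ condition holds with $(p(x),Q(x))$, where the quadratic term is understood as $\frac12\innerprod{Q(x)W_t,W_t}$, consistently with how \cref{ito_lemma} is proved. \cref{diff_from_expectation}, which is stated exactly under this hypothesis for convex functions of polynomial growth, then shows that $\tilde f$ is second differentiable at every such $x$, with first derivative $p(x)$ and Hessian $Q(x)$. Restricting to $x\in B$, where $\tilde f=f$ in a neighbourhood, $f$ is second differentiable at a.e.\ $x\in B$. Taking the countable union over balls finishes the proof.

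\textbf{Main obstacle.} The only delicate point is Step 1: making sure the extension is genuinely convex with linear growth, and that the objects $p$ and $Q$ attached to $\tilde f$ coincide locally with those of $f$. This requires that $\mu$ be determined locally, and it is, because $\mu$ is defined by testing against $C_c^\infty$ functions. If one prefers to avoid the sup construction, an alternative is to multiply by nothing and instead add $K\norm{y}^2$ outside a ball via a max with a large convex quadratic. The sup of supporting affine functions is cleaner, though, and it is the route I would take. Once the extension is in place, the argument is a direct concatenation of \cref{ito_lemma} and \cref{diff_from_expectation}.
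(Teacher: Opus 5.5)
Your proposal is correct and follows the paper's proof. You reduce to a convex function of linear growth on $\mathbb{R}^d$ by a local extension, then combine \cref{ito_lemma} with \cref{diff_from_expectation}; you are right to read the quadratic term with the factor $\frac12$ in both results.

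The only difference is the extension. You take a supremum of supporting affine functions over a ball compactly inside $U$. This is essentially the same minimal convex extension that the paper writes via secant extrapolation along rays. Your formula makes convexity and linear growth immediate, and it makes explicit the bounded-slope condition that the paper's restriction to a closed ball tacitly needs.
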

\begin{proof}
Because we only focus on the differentiability, we can consider the restriction of any convex function $f$ to some bounded domain. Then we can extend that restricted function to the whole $\mathbb{R}^d$ to get a new convex function with at most linear growth. Any a.e (second) differentiability property of the new function yields the same property for the restricted function. We do the extension as follows: suppose we're given the restricted function $f$ on $\overline{B(0, 1)}$. Fix a point $z$ outside $\overline{B(0, 1)}$. We consider the set $\mathcal{Z}$ be the set of $(x, y) \in \overline{B(0, 1)}^2$ so that $x, y, z$ is on a ray that goes from the inside to the outside of the unit ball in that order. Then we extend $f$ to the whole domain $\mathbb{R}^d$ by defining:
$$f(z):= \max_{(x, y) \in \mathcal{Z}} \left( f(x) + \frac{\norm{z-x}(f(y)-f(x))}{\norm{y-x}}\right)$$

As a result, we can reduce to the case when the function $f$ is at most linear growth. The proof then follows from \cref{ito_lemma} and \cref{diff_from_expectation}.
\end{proof}
\vspace{2mm}

\begin{ack}
This work is done during 2020 at the Department of Mathematics, University of Texas at Austin. We would like to thank Professor Gordan Zitkovic and Professor Mihai Sirbu for helpful discussions.
\end{ack}

\nocite{*}
% Bibliography
\newpage
\bibliography{citation}{}

@article{barlow_protter_1990, title={On convergence of semimartingales}, DOI={10.1007/bfb0083765}, journal={Lecture Notes in Mathematics Seminaire de Probabilites XXIV 1988/89}, author={Barlow, Martin T. and Protter, Philip}, year={1990}, pages={188–193}}

@article{bourgain_sato_1986, title={A direct proof of van der Vaart's theorem}, volume={84}, DOI={10.4064/sm-84-2-125-131}, number={2}, journal={Studia Mathematica}, author={Bourgain, J. and Sato, H.}, year={1986}, pages={125–131}}

@article{carlen_protter_1992, title={On semimartingale decompositions of convex functions of semimartingales}, volume={36}, DOI={10.1215/ijm/1255987418}, number={3}, journal={Illinois Journal of Mathematics}, author={Carlen, Eric and Protter, Philip}, year={1992}, pages={420–427}}

@article{dudley_1977, title={On second derivates of convex functions.}, volume={41}, DOI={10.7146/math.scand.a-11710}, journal={Mathematica Scandinavica}, author={Dudley, R. M.}, year={1977}, pages={159}}

@article{grinberg_2013, title={Semimartingale decomposition of convex functions of continuous semimartingales by Brownian perturbation}, volume={17}, DOI={10.1051/ps/2011146}, journal={ESAIM: Probability and Statistics}, author={Grinberg, Nastasiya F.}, year={2013}, pages={293–306}}

@article{sato_1985, title={Characteristic functional of a probability measure absolutely continuous with respect to a Gaussian Radon measure}, volume={61}, DOI={10.1016/0022-1236(85)90035-7}, number={2}, journal={Journal of Functional Analysis}, author={Sato, Hiroshi}, year={1985}, pages={222–245}}

@book{evans_gariepy_1992, place={Boca Raton}, title={Measure theory and fine properties of functions}, publisher={CRC Press}, author={Evans, Lawrence C. and Gariepy, Ronald F.}, year={1992}}

@book{feller_1966, place={New York}, title={An introduction to probability theory and its applications}, publisher={Wiley}, author={Feller, W.}, year={1966}}

@book{fukushima_oshima_yoichi_takeda_1994, place={Berlin}, title={Dirichlet forms and symmetric Markov processes}, publisher={de Gruyter}, author={Fukushima, Masatoshi and Oshima Yoichi and Takeda, Masayoshi}, year={1994}}

@book{revuz_yor_1999, place={Berlin}, title={Continuous martingales and brownian motion}, publisher={Springer}, author={Revuz, Daniel and Yor, Marc}, year={1999}}
\bibliographystyle{plain} 

\end{document}